\newtheorem{theorem}{Theorem}[section]
\newtheorem{cor}{Corollary}[theorem]
\newtheorem{conj}[theorem]{Conjecture}
\theoremstyle{definition}
\newtheorem{example}[theorem]{Example}
\newtheorem{remark}[theorem]{Remark}
\definecolor{palestinianGreen}{RGB}{0,128,0}
\definecolor{palestinianRed}{RGB}{206,17,38}
\title{The strong Viterbo conjecture and various flavours of duality in Lagrangian products}
\author{Alejandro Vicente}
\date{}
\begin{document}

\maketitle

\begin{abstract}
In this note we analyze normalized symplectic capacities for two different notions of duality in Lagrangian products. Let $\Phi$ be a $n$-tuple of Young functions with Legendre transform $n$-tuple $\Phi^*$ and $K_{\Phi}$ the unit ball for the Luxemburg metric induced by $\Phi$. We can consider the ``dual functional" Lagrangian product $K_{\Phi}\times_LK_{\Phi^*}$ and the usual polar dual Lagrangian product $K_{\Phi}\times_L K_{\Phi}^{\circ}$. We show that for the former, all normalized symplectic capacities agree, while for the latter, we give a lower bound depending on $\Phi$. In particular, under certain conditions on the $n$-tuple $\Phi$, we get that $c(K_{\Phi}\times_L K_{\Phi}^{\circ})=4$, for any normalized symplectic capacity, that is, the strong Viterbo conjecture holds.
\end{abstract}

\section{Introduction}

One of the foundational, and most groundbreaking results in the area of Symplectic Topology, is Gromov's Non-Squeezing Theorem in \cite{Gromov1985PseudoHC}. Let 
\begin{equation*}
    \begin{split}
        B^{2n}(r)&:=\{(x_1,\ldots,x_n,y_1,\ldots,y_n)\in \mathbb{R}^{2n}\mid x_1^2+\ldots x_n^2+y_1^2+\ldots y_n^2\leq r/\pi\},\\
        Z^{2n}(r)&:=B^2(r)\times \mathbb{R}^{2n-2}.
    \end{split}
    \end{equation*}
Gromov's result asserts that, $B^{2n}(r)$ can be symplectically embedded into $Z^{2n}(R)$ if, and only if, $r\leq R$. Beyond the clear implication that symplectic embedding maps are more than just area preserving embeddings, there is more than meets the eye in this seminal result. This is the first instance of what later became known as symplectic capacities.

A \textit{symplectic capacity} $c$ is a map that assigns a non-negative real number or infinite, to a symplectic manifold $(M,\omega)$ such that the following conditions are met:
\begin{itemize}
\item (\textit{Monotonicity}) If $(M_1,\omega_1)$ can be symplectically embedded into $(M_2,\omega_2)$, then $c(M_1,\omega_1)\leq c(M_2,\omega_2)$,
\item (\textit{Conformality}) For $\lambda\neq 0$, we have that $c(M,\lambda \omega)=|\lambda|c(M,\omega)$,
\item (\textit{Non-triviality}) $c(B^{2n}(1),\omega_0)=c(Z^{2n}(1),\omega_0)=1$.
\end{itemize}
In addition, we say that a symplectic capacity is \textit{normalized} if $c(B^{2n}(1),\omega_0)=c(Z^{2n}(1),\omega_0)$.

To see how Gromov's result relates to symplectic capacities, given a symplectic manifold $(M,\omega)$, define the following quantity:
$$c_{Gr}(M,\omega):=\sup\{r\mid (B^{2n}(r),\omega_0) \textup{ can be symplectically embedded into }(M,\omega)\},$$
where $\omega_0:=\sum_{i=1}^{n}dx_i\wedge dy_i$ is the standard symplectic form in $\mathbb{R}^{2n}$. The quantity $c_{Gr}(M,\omega)$ is a positive real number, possibly infinite and it is usually referred to as the \textit{Gromov width} of the manifold $(M,\omega)$. Taking Gromov's result for granted, it is not hard to see that the Gromov width is a symplectic capacity. Furthermore, we can easily see that for any normalized symplectic capacity $c$, we have that $c_{Gr}(M,\omega)\leq c(M,\omega)$. 

Many other examples of symplectic capacities have been given in the last three decades, see \cite{Cieliebak2005QuantitativeSG} for an account on the subject. One of particular significance is the \textit{cylindrical capacity} $c_Z$, defined as follows:
$$c_Z(M,\omega):=\inf\{r\mid (M,\omega) \textup{ can be symplectically embedded into } (Z^{2n}(r),\omega_0)\}.$$
This is well-defined, at least whenever $(M,\omega)$ is a domain in $\mathbb{R}^{2n}$. It is not hard to see that for any normalized symplectic capacity $c$, we get that $c(M,\omega)\leq c_Z(M,\omega)$.

One particular problem has driven the study of symplectic capacities forward in the last three decades, the \textit{Viterbo conjecture} \cite{Viterbo2000MetricAI}. 

\begin{conj}[Viterbo, 2000]\label{conj: viterbo}
For any symplectic capacity $c$, and any convex
domain $X \subset \mathbb{R}^{2n}$
$$c^n(X) \leq n!\textup{Vol}(X).$$
\end{conj}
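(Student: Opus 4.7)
Since the statement is the Viterbo conjecture, which remains open in general, the best I can offer is a proposal outlining a plausible strategy and the points at which it currently stalls. Both sides of the inequality behave well under Hausdorff approximation of convex bodies (volume is classical; every normalized symplectic capacity is continuous on convex bodies), so I would first reduce to the case of smooth, strictly convex $X$ with $C^\infty$ boundary. Since any normalized capacity satisfies $c_{Gr}\leq c \leq c_Z$, proving the bound for the maximal candidate $c_Z$ would suffice, but $c_Z$ is rarely explicitly computable; the natural working capacity is the Ekeland--Hofer--Zehnder capacity $c_{EHZ}$, which on smooth convex domains equals the minimal action $A_{\min}(\partial X)$ of a closed characteristic on the boundary.

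The heart of the plan is to bound $A_{\min}(\partial X)$ by a geometric quantity depending on the support function and Minkowski functional of $X$. Closed characteristics on $\partial X$ can be described as solutions of an autonomous Hamiltonian system whose Hamiltonian is the square of the Minkowski gauge of $X$, so the minimal action becomes a Minkowski-isoperimetric quantity. I would then attempt an inequality of the form $A_{\min}(\partial X)^n \leq n!\,\mathrm{Vol}(X)$, probably through an induction on $n$ combined with a Fubini-type slicing by symplectic coordinate subspaces, looking for extremizers among Lagrangian products $K\times_L L$, which force the expected sharp constant $n!$.

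The main obstacle, and the reason this has remained open for more than two decades, is that even the $c_{EHZ}$ version is itself equivalent on Lagrangian products (by Artstein-Avidan, Karasev, and Ostrover) to the Mahler conjecture in convex geometry, which is also open. Moreover, upgrading from $c_{EHZ}$ to an arbitrary normalized capacity is precisely the \emph{strong Viterbo conjecture}, which remains open in general. Thus any realistic proof would need to either simultaneously crack Mahler's conjecture or exploit extra structure of the convex bodies in question; the present paper follows the latter route by restricting to Lagrangian products $K_\Phi\times_L K_\Phi^\circ$ built from $n$-tuples of Young functions, where the additional structure suffices to pin down every normalized capacity to the expected value.
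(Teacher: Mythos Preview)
There is nothing to compare here: the paper does not prove this statement, and cannot, because it is stated as a \emph{conjecture}, and moreover the paper itself records (just below the statement) that Haim-Kislev and Ostrover have produced a counterexample. So Conjecture~\ref{conj: viterbo} is in fact \emph{false} for general convex $X$, and your opening premise that it ``remains open in general'' is factually incorrect. Any global proof strategy, including the $c_{EHZ}$/closed-characteristic route you sketch, is therefore doomed from the start: the inequality $A_{\min}(\partial X)^n \leq n!\,\mathrm{Vol}(X)$ you are aiming at is precisely what the Haim-Kislev--Ostrover example violates (their computation is of the Hofer--Zehnder capacity of a Lagrangian product of a regular pentagon with its $90^\circ$ rotation).

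What survives, and what the paper actually pursues, is the restricted question of whether the conjecture (and its strong form, Conjecture~\ref{conj: viterbo2}) holds on specific subclasses of convex domains, in particular Lagrangian products $K\times_L K^\circ$ with $K$ centrally symmetric, where the link to Mahler's conjecture makes the question genuinely interesting. Your last paragraph correctly identifies this restricted direction, but that is not a proof proposal for the stated conjecture; it is an acknowledgment that the general statement is out of reach. In short: there is no ``paper's own proof'' to compare against, and your proposal should instead begin by noting that the conjecture has been disproved and then, if anything, discuss for which classes of $X$ the inequality might still hold.
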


It was shown in \cite{ArtsteinAvidan2013FromSM}, that the famous Mahler’s conjecture on the volume
product of centrally symmetric convex bodies, which had remained open for the
past 80 years, and has been proven for dimension less than 4, is equivalent to
the restriction of Conjecture \ref{conj: viterbo} to Lagrangian products $K\times_L K^{\circ}$ of centrally symmetric
convex bodies $K \subset\mathbb{R}^n$ and their dual bodies $K^{\circ} \subset \mathbb{R}^n$, where $K^{\circ}$ is the \textit{polar dual body} of $K\subset \mathbb{R}^n$, defined as
$$K^{\circ}:=\{y\in \mathbb{R}^n\mid\sup_{x\in K} |\langle x,y\rangle|\leq 1\},$$
and given $K\subset{R}^n_{(x_1,\ldots,x_n)}$ and $T\subset{R}^n_{(y_1,\ldots,y_n)}$, we define the \textit{Lagrangian product} $K\times_L T$ to be the usual Cartesian product, with symplectic form $\sum_{i=1}^ndx_i\wedge dy_i$.

Recently, Haim-Kislev and Ostrover \cite{HaimKislev2024ACT} provided a counterexample to the Viterbo conjecture. They computed the Hofer-Zehnder capacity of the Lagrangian product of a regular pentagon and its $90^{\circ}$ rotation, and showed that for this capacity, Conjecture \ref{conj: viterbo} does not hold.

Although Conjecture \ref{conj: viterbo} is not valid for all convex domains in $\mathbb{R}^{2n}$, in view of its connection to the Mahler conjecture, it is still an interesting open question whether it holds for the class of Lagrangian products $K\times_L K^{\circ}$, where $K\subset\mathbb{R}^n$ is a centrally symmetric convex body and $K^{\circ}$ is the polar dual body of $K\subset \mathbb{R}^n$.

Very little progress has been done in this restricted version of the Viterbo conjecture. The conjecture has been shown to be true when taking $K$ to be the unit ball in the $p$-norm, see \cite{Karasev2019MahlersCF}. 

In this paper, we analyze for certain class of symmetrically convex bodies in $\mathbb{R}^{2n}$, two different notions of dual body and study the symplectic capacities of the two different Lagrangian products induced by these two notions. Our goal is to discuss some classes of domains in $\mathbb{R}^{2n}$ where the following conjecture holds. 

\begin{conj}[strong Viterbo conjecture]\label{conj: viterbo2}
If $X$ is a convex domain in $\mathbb{R}^{2n}$, then all
normalized symplectic capacities of $X$ agree.
\end{conj}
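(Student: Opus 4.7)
The strong Viterbo conjecture is open in full generality, so any realistic plan has to target a restricted class of domains; in the context of this paper, the natural targets are the Lagrangian products $K_\Phi \times_L K_{\Phi^*}$ and $K_\Phi \times_L K_\Phi^{\circ}$. The universal sandwich
$$c_{Gr}(X)\leq c(X)\leq c_Z(X),$$
valid for every normalized symplectic capacity $c$, reduces the problem to proving the equality $c_{Gr}(X) = c_Z(X)$, so my overall strategy is to pin both extremes to a single explicit value $\kappa(\Phi)$.

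Step one is to identify $\kappa(\Phi)$. For a Lagrangian product of centrally symmetric convex bodies one has a combinatorial variational formula for the Ekeland--Hofer--Zehnder capacity $c_{EHZ}$ as the action of a shortest closed generalized characteristic on the boundary, and every normalized capacity is bounded below by $c_{EHZ}$. For the dual functional product $K_\Phi \times_L K_{\Phi^*}$ the Legendre duality between $\Phi$ and $\Phi^*$ yields a particularly clean one-parameter family of ``diagonal'' characteristics whose action I would compute directly from the Luxemburg functional; I expect this to give $\kappa(\Phi) = 4$, matching the value asserted in the abstract. In the polar dual case $K_\Phi \times_L K_\Phi^{\circ}$ I would relate $K_\Phi^{\circ}$ to $K_{\Phi^*}$ via an explicit comparison factor arising from the discrepancy between the Luxemburg and Minkowski functionals, and then bootstrap from the dual functional computation; this is where the ``certain conditions on $\Phi$'' mentioned in the abstract is likely to be needed.

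Step two is the matching lower bound $c_{Gr}\geq \kappa(\Phi)$, which I would obtain by constructing an explicit symplectic embedding of the ball $B^{2n}(\kappa(\Phi))$ via action-angle coordinates on a Lagrangian torus fibration adapted to the product structure, in the spirit of Karasev's argument for $\ell^p$-balls. Step three, which I expect to be the main obstacle, is the matching upper bound $c_Z\leq \kappa(\Phi)$: one has to produce a symplectic embedding of the full Lagrangian product into a cylinder $Z^{2n}(\kappa(\Phi))$. The natural approach is a symplectic ``flattening'' map --- a Hamiltonian isotopy that straightens one Lagrangian factor onto a coordinate $2$-plane while preserving the periods of the Liouville form on characteristic loops --- and the rigidity of the dual functional geometry should make such a construction essentially forced. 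Once both extremes agree with $\kappa(\Phi)$, the sandwich forces every normalized capacity to equal $\kappa(\Phi)$, yielding Conjecture \ref{conj: viterbo2} for these domains and, in the polar case, the asserted value $c = 4$.
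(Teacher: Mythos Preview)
Your sandwich strategy $c_{Gr}\leq c\leq c_Z$ is exactly the paper's framework, but several of the steps you fill in are either wrong or inverted in difficulty. First, the value: for the functional dual product $K_\Phi\times_L K_{\Phi^*}$ the common capacity is \emph{not} $4$ but $4\min_i\bigl(\Phi_i^{-1}(1)(\Phi_i^*)^{-1}(1)\bigr)$; the value $4$ in the abstract refers to the \emph{polar} dual case $K_\Phi\times_L K_\Phi^{\circ}$, and only under the extra hypothesis $\Phi_I'(1)=1$. Second, your Step~1 is circular: the assertion that ``every normalized capacity is bounded below by $c_{EHZ}$'' is not known --- that is precisely part of the strong Viterbo conjecture. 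The paper never touches $c_{EHZ}$ or closed characteristics; it bounds $c_{Gr}$ and $c_Z$ directly.

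You have also misjudged which direction is hard. The upper bound $c_Z\leq\kappa(\Phi)$ is the \emph{trivial} step: the Lagrangian product sits inside the box $\prod_i[-\Phi_i^{-1}(1),\Phi_i^{-1}(1)]\times[-(\Phi_i^*)^{-1}(1),(\Phi_i^*)^{-1}(1)]$, and projecting onto the $I$-th symplectic $2$-plane (where the minimum is attained) lands in a rectangle of area $4\Phi_I^{-1}(1)(\Phi_I^*)^{-1}(1)$, so Gromov non-squeezing finishes it immediately --- no Hamiltonian flattening needed. The actual work is the lower bound on $c_{Gr}$, and here the paper's construction is more elementary than your action--angle proposal: it takes a product $\sigma_1\times\cdots\times\sigma_n$ of $2$-dimensional area-preserving maps, each $\sigma_i$ sending the disc of area $\pi|z|^2$ into a rectangle with $\Phi_i(|x|)$ and $\Phi_i^*(|y|)$ controlled by $\pi|z|^2/c_i$, and then checks via Young's inequality that the image of $B^{2n}(c)$ satisfies the defining inequalities of $K_\Phi\times_L K_{\Phi^*}$ (respectively of $\sqrt{2}\,K_\Phi\times_L K_\Phi^{\circ}$, which is where the factor of $2$ in the polar lower bound comes from).
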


Notice that a counterexample to Conjecture \ref{conj: viterbo} proves that Conjecture \ref{conj: viterbo2} is also not valid for every $X$ convex. However, as pointed out before, it is interesting to understand for what class of domains it does hold. It was shown in \cite{Gutt2022ExamplesAT} that Conjecture \ref{conj: viterbo2} holds for monotone toric domains in $\mathbb{R}^4$ and compact convex domains in $\mathbb{C}^n$ invariant by the diagonal $S^1$-action. The first of these results was subsequently generalized to higher dimensions in \cite{CristofaroGardiner2023OnTA}. It is then a consequence of the results in \cite{Ostrover2019SymplecticEO} and \cite{Gutt2022ExamplesAT} that Conjecture \ref{conj: viterbo2} also holds for the Lagrangian $l^p$-sum of two disks. Our two main results give two different classes of Lagrangian products where Conjecture \ref{conj: viterbo2} holds. Before stating our results, we introduce some necessary setup.

%Let $K\subset\mathbb{R}^n$ be a convex, centrally symmetric set, i.e. $\mathbf{x}\in K$ implies $-\mathbf{x}\in K$. Its \textit{polar dual body} $K^{\circ}$ is defined as
%$$K^{\circ}=\left\{y\in \mathbb{R}^n\mid\sup_{x\in K}|\langle x,y\rangle|\leq 1\right\}$$

Let $\Phi:=(\Phi_1,\ldots,\Phi_n)$ be a $n$-tuple of \textit{Young functions}, i.e. $\Phi_i:\mathbb{R}_{\geq 0} \to \mathbb{R}_{\geq 0}$ is smooth, convex, increasing, such that $\Phi_i(0)=0$ and $\lim_{x\to \infty}\Phi_i(x)=\infty$. Notice that this implies, in particular, that the inverse functions $\Phi_i^{-1}$ exist. The \textit{Luxemburg norm} induced by $\Phi$ in $\mathbb{R}^n$ is defined by:
$$||(x_1,\ldots,x_n)||_{\Phi}:=\inf\left\{\lambda>0\mid \sum_{i=1}^{n}\Phi_i\left(\frac{|x_i|}{\lambda}\right)\leq 1\right\}.$$
We define the unit ball $K_{\Phi}\subset\mathbb{R}^n$ for this norm, by the set of points $(x_1,\ldots,x_n)\in \mathbb{R}^n$ such that 
$$\Phi_1(|x_1|)+\ldots+\Phi_n(|x_n|)\leq 1.$$
Let $\Phi^*:=(\Phi^*_1,\ldots,\Phi^*_n)$ with $\Phi^*_i:\mathbb{R}_{\geq 0} \to \mathbb{R}_{\geq 0}$, be the \textit{Legendre transform} of $\Phi$, i.e. 
\begin{equation}\label{eq: legendre_transform}
\Phi^*_i(y)=\sup_{x\in \mathbb{R}_{\geq 0}}(xy-\Phi_i(x)), \hspace{5mm} \textup{for every }i=1,\ldots,n.
\end{equation}
Then, $\Phi_i^*$ is also a Young function, and so, we can define also a unit ball associated to $\Phi^*$. From equation \eqref{eq: legendre_transform} is evident that the pair $(\Phi_i,\Phi_i^*)$ satisfies \textit{Young's inequality}:
\begin{equation}\label{eq: young}
xy\leq \Phi_i(x)+\Phi_i^*(y),
\end{equation}
with equality if, and only if, $y=\Phi_i'(x)$. In particular, it follows from Young's inequality, that 
\begin{equation}\label{eq: young_2}
\Phi_i^{-1}(x)(\Phi_i^*)^{-1}(y)\leq x+y,
\end{equation}
for every $x,y\in \mathbb{R}_{> 0}$. Furthermore, we also have the bound
\begin{equation}\label{eq: lower_ineq_Young}
x<\Phi_i^{-1}(x)(\Phi_i^*)^{-1}(x),
\end{equation}
see \cite[Proposition 2.1]{Rao1991TheoryOO}. We include in this note the proof of \eqref{eq: lower_ineq_Young} for the sake of completeness, although we will defer it to Section \ref{sec: inequality}.

We are now ready to state the first of our two main results. 

\begin{theorem}\label{thm: capacity}
Let $c$ be any normalized symplectic capacity. Let $K_{\Phi}\subset \mathbb{R}^n$ be the unit ball of a $\Phi$-norm and $K_{\Phi^*}\subset \mathbb{R}^n$ be the unit ball of the $\Phi^*$-norm. Then
$$c(K_{\Phi}\times_L K_{\Phi^*})=4\min_i(\Phi_i^{-1}(1){(\Phi^*_i)}^{-1}(1)).$$
In particular, for any Young function $\Phi$, we have that $c(K_{\Phi}\times_L K_{\Phi^*})\leq 8$.
\end{theorem}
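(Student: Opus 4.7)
The plan is to sandwich every normalized symplectic capacity $c$ between the Gromov width and the cylindrical capacity of $X := K_\Phi \times_L K_{\Phi^*}$, and show that both extremes equal $r := 4\min_i \Phi_i^{-1}(1)(\Phi_i^*)^{-1}(1)$. Since every normalized capacity satisfies $c_{Gr}(X) \leq c(X) \leq c_Z(X)$, this pins down $c(X) = r$. The ``in particular'' statement $c(X) \leq 8$ is then immediate from Young's inequality \eqref{eq: young} applied at $x = \Phi_i^{-1}(1)$, $y = (\Phi_i^*)^{-1}(1)$: since $\Phi_i(x) = \Phi_i^*(y) = 1$, we get $\Phi_i^{-1}(1)(\Phi_i^*)^{-1}(1) \leq 2$, and hence $r \leq 8$.

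For the \textbf{upper bound} I would pick an index $i_0$ attaining the minimum and observe that the image of $X$ under the projection onto the symplectic plane $\mathbb{R}^2_{(x_{i_0}, y_{i_0})}$ is precisely the axis-aligned rectangle
$$R_{i_0} := [-\Phi_{i_0}^{-1}(1), \Phi_{i_0}^{-1}(1)] \times [-(\Phi_{i_0}^*)^{-1}(1), (\Phi_{i_0}^*)^{-1}(1)]$$
of area $r$ (realized by setting all other $x_j, y_j$ to zero). Since every area-preserving diffeomorphism of $\mathbb{R}^2$ is symplectic, for every $\varepsilon > 0$ there is a symplectic embedding $R_{i_0} \hookrightarrow B^2(r + \varepsilon)$; extending by the identity on the remaining $2n-2$ coordinates yields a symplectic embedding $X \hookrightarrow Z^{2n}(r + \varepsilon)$, proving $c_Z(X) \leq r$.

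For the \textbf{lower bound} one must construct a symplectic embedding $\iota : B^{2n}(r - \varepsilon) \hookrightarrow X$ for every $\varepsilon > 0$. Relabel so that $i_0 = 1$. Working in action-angle coordinates $(I_i, \theta_i)$ on each symplectic plane, with $I_i = (x_i^2 + y_i^2)/2$, the ball becomes $\{I_i \geq 0,\ \sum_i I_i \leq (r - \varepsilon)/(2\pi)\}$ and $\omega_0 = \sum dI_i \wedge d\theta_i$. The plan is to design a fibered symplectic map that concentrates most of the action in the first plane, so that $(x_1, y_1)$ sweeps a subset of $R_1$ of area close to $r$, while compressing the other coordinates to thin sets where $\sum_{j \geq 2} \Phi_j(|x_j|)$ and $\sum_{j \geq 2} \Phi_j^*(|y_j|)$ are both at most $\varepsilon$. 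Young's inequality \eqref{eq: young} then ensures the image satisfies both Luxemburg constraints simultaneously.

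The \textbf{main obstacle} is carrying this construction out explicitly while controlling the two Luxemburg constraints, which couple all $x$-coordinates and all $y$-coordinates respectively. A naive inscribed polydisc $P(a_1, \ldots, a_n) \subset X$ cannot suffice: optimizing $\max \min_i 4 \Phi_i^{-1}(\alpha_i)(\Phi_i^*)^{-1}(\beta_i)$ over $\sum \alpha_i = \sum \beta_i = 1$ falls strictly short of $r$ in general---already in the Euclidean case $\Phi_i(x) = x^2/2$ the best polydisc inscribed this way has capacity $8/n$ instead of $8$. Consequently $\iota$ must genuinely use the symplectic structure, for instance via a Traynor-type symplectic stacking or a generating-function construction, to redistribute action among the planes efficiently. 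Inequality \eqref{eq: lower_ineq_Young} confirms $r > 4$, so the target strictly exceeds the classical Viterbo lower bound and the construction carries genuine symplectic content.
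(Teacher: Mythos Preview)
Your upper bound is exactly the paper's argument: project onto the $(x_{i_0},y_{i_0})$ symplectic plane, land in the rectangle $R_{i_0}$ of area $r$, and conclude $c(X)\le c_Z(X)\le r$ via Non-Squeezing. The ``$\le 8$'' consequence from \eqref{eq: young_2} is also correct.

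The lower bound, however, has a genuine gap: you never construct the embedding, and the heuristic you propose (``concentrate most of the action in the first plane, compress the others'') is not the right mechanism. You correctly observe that a fixed inscribed polydisc is too weak (your $8/n$ computation is fine), but the conclusion that one therefore needs Traynor-type stacking or generating functions is wrong. The paper's embedding is still a \emph{product} of $2$-dimensional area-preserving maps; the point you are missing is that each factor must be \emph{radially adaptive} rather than target a fixed rectangle.

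Concretely, the paper chooses area-preserving maps
\[
\sigma_i:B^2(c_i)\to\mathbb{R}^2,\qquad c_i:=4\,\Phi_i^{-1}(1)(\Phi_i^*)^{-1}(1),
\]
sending the origin to the origin and satisfying
\[
\Phi_i\bigl(|x(\sigma_i(z))|\bigr)<\tfrac{\pi|z|^2}{c_i}+\tfrac{\varepsilon}{n},
\qquad
\Phi_i^*\bigl(|y(\sigma_i(z))|\bigr)<\tfrac{\pi|z|^2}{c_i}+\tfrac{\varepsilon}{n},
\]
i.e.\ each concentric disc of area $a$ lands in the rectangle $\{\Phi_i(|x|)\le a/c_i+\varepsilon/n,\ \Phi_i^*(|y|)\le a/c_i+\varepsilon/n\}$; by \eqref{eq: young_2} and \eqref{eq: lower_ineq_Young} this rectangle has area at most $8(a/c_i+\varepsilon/n)$ but at least $4(a/c_i+\varepsilon/n)>a$, so such a $\sigma_i$ exists. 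Now restrict $\sigma_1\times\cdots\times\sigma_n$ to the ball $B^{2n}(c)\subset\prod B^2(c_i)$, where $c=\min_i c_i$. Since $c\le c_i$ for all $i$,
\[
\sum_i\Phi_i(|x_i|)\ <\ \sum_i\frac{\pi|z_i|^2}{c_i}+\varepsilon\ \le\ \frac{\pi}{c}\sum_i|z_i|^2+\varepsilon\ \le\ 1+\varepsilon,
\]
and similarly for $\sum_i\Phi_i^*(|y_i|)$, so the image lies in $(1+O(\varepsilon))(K_\Phi\times_L K_{\Phi^*})$. The two Luxemburg constraints decouple precisely because each $\sigma_i$ controls $\Phi_i(|x|)$ and $\Phi_i^*(|y|)$ \emph{simultaneously and linearly in $|z_i|^2$}; no redistribution of action between planes is needed.
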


Notice that we can then obtain a Mahler-like statement from Theorem \ref{thm: capacity}. Indeed, since Conjecture \ref{conj: viterbo} can be easily seen to hold for the Gromov width, which by Theorem \ref{thm: capacity} is equal to $4\min_i(\Phi_i^{-1}(1){(\Phi^*_i)}^{-1}(1))$, we then have the relation
$$\frac{4^n}{n!}\leq \frac{\textup{Vol}(K_{\Phi})\textup{Vol}(K_{\Phi^*})}{\left(\min_i(\Phi_i^{-1}(1){(\Phi^*_i)}^{-1}(1))\right)^n}<\textup{Vol}(K_{\Phi})\textup{Vol}(K_{\Phi^*}),$$
by simple use of the property \eqref{eq: lower_ineq_Young}.

As an application of this theorem, we recover a result by Karasev in \cite{Karasev2019MahlersCF}.

\begin{cor}
Let $D_p(1)$ be the unit ball in the $p$-norm, for $1<p<\infty$. Then, for any normalized symplectic capacity $c$, we get that $c(D_p(1)\times_L D_q(1))=4$, where $q$ is such that $1/p+1/q=1$.
\end{cor}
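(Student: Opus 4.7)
\medskip

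\noindent\emph{Proof plan.} The strategy is to apply Theorem \ref{thm: capacity} to a cleverly chosen $n$-tuple of Young functions and then reconcile the resulting Luxemburg unit balls with the genuine $p$-norm balls $D_p(1)$ and $D_q(1)$ via a symplectic rescaling.

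First, I would take $\Phi_i(x) = x^p/p$ for every $i = 1, \ldots, n$. A direct computation with \eqref{eq: legendre_transform} identifies the Legendre transform as $\Phi_i^*(y) = y^q/q$, where $q$ is the H\"older conjugate of $p$ (i.e.\ $1/p + 1/q = 1$). The Luxemburg unit balls are then $K_{\Phi} = \{x \in \mathbb{R}^n : \sum_i |x_i|^p \le p\} = p^{1/p}\, D_p(1)$ and, symmetrically, $K_{\Phi^*} = q^{1/q}\, D_q(1)$, while $\Phi_i^{-1}(1) = p^{1/p}$ and $(\Phi_i^*)^{-1}(1) = q^{1/q}$. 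Substituting into Theorem \ref{thm: capacity} immediately gives
\begin{equation*}
c\bigl(p^{1/p}\, D_p(1) \times_L q^{1/q}\, D_q(1)\bigr) \;=\; 4\, p^{1/p}\, q^{1/q}.
\end{equation*}

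Second, I would strip off the scaling factors by the following standard trick: for any $\alpha, \beta > 0$, the diagonal map $(x,y) \mapsto \bigl(\sqrt{\alpha/\beta}\,x,\ \sqrt{\beta/\alpha}\,y\bigr)$ is a symplectomorphism of $(\mathbb{R}^{2n}, \omega_0)$ carrying $\sqrt{\alpha\beta}\,D_p(1) \times_L \sqrt{\alpha\beta}\,D_q(1)$ to $\alpha\,D_p(1) \times_L \beta\,D_q(1)$, while the uniform rescaling $(x,y) \mapsto (\lambda x, \lambda y)$ combined with conformality yields $c(\lambda X) = \lambda^2 c(X)$. Combining these produces the identity
\begin{equation*}
c\bigl(\alpha\, D_p(1) \times_L \beta\, D_q(1)\bigr) \;=\; \alpha\beta\, \cdot\, c\bigl(D_p(1) \times_L D_q(1)\bigr).
\end{equation*}
Setting $\alpha = p^{1/p}$ and $\beta = q^{1/q}$ and comparing with the previous display gives $c(D_p(1) \times_L D_q(1)) = 4$, as claimed.

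There is no real obstacle to this argument: once Theorem \ref{thm: capacity} is granted, only elementary Legendre-transform and scaling manipulations remain. The one subtlety is to keep careful track of the normalization mismatch between the Luxemburg unit ball $K_{\Phi}$ (defined by $\sum \Phi_i(|x_i|) \leq 1$) and the $p$-norm unit ball $D_p(1)$, which is precisely what the symplectic rescaling step is designed to handle.
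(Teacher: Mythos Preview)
Your proposal is correct and follows essentially the same route as the paper: both choose $\Phi_i(t)=t^p/p$, apply Theorem \ref{thm: capacity} to get $c(K_\Phi\times_L K_{\Phi^*})=4p^{1/p}q^{1/q}$, and then remove the scaling via a symplectic rescaling and conformality. The only difference is cosmetic: the paper states the symplectomorphism $K_{\Phi}\times_L K_{\Phi^*}\overset{s}{\cong}p^{1/(2p)}q^{1/(2q)}(D_p(1)\times_L D_q(1))$ directly, whereas you spell out the underlying diagonal map $(x,y)\mapsto(\sqrt{\alpha/\beta}\,x,\sqrt{\beta/\alpha}\,y)$ explicitly.
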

\begin{proof}
Let $p>1$ and for every $i=1,\ldots,n$, take $\Phi_i(t)=\frac{t^p}{p}$. Then, it can be shown that the Legendre transform of $\Phi_i$ is given by $\Phi^*_i(t)=\frac{t^q}{q}$, for $q$ such that $1/p+1/q=1$. By Theorem \ref{thm: capacity}, we have that $c(K_{\Phi}\times_L K_{\Phi^*})=4p^{\frac{1}{p}}q^{\frac{1}{q}}$. It is not hard to see that 
$$K_{\Phi}\times_L K_{\Phi^*}\overset{s}{\cong}p^{\frac{1}{2p}}q^{\frac{1}{2q}}(D_p(1)\times_L D_q(1)).$$
This comes from the observation that $K_{\Phi}\times_L K_{\Phi^*}= \left(p^{1/p}D_p(1)\right)\times_L \left(q^{1/q}D_q(1)\right)$, and the symplectomorphism is given by the rescaling 
$$(x_1,\ldots,x_n,y_1,\ldots,y_n) \to \left(\frac{q^{1/2q}}{p^{1/2p}}x_1,\ldots,\frac{q^{1/2q}}{p^{1/2p}}x_n,\frac{p^{1/2p}}{q^{1/2q}}y_1,\ldots,\frac{p^{1/2p}}{q^{1/2q}}y_n\right).$$
This implies, by the conformality property of symplectic capacities, that 
\[
\scalebox{0.9}{$
4p^{\frac{1}{p}}q^{\frac{1}{q}}=c\left(K_{\Phi}\times_L K_{\Phi^*}\right)=\left(p^{\frac{1}{2p}}q^{\frac{1}{2q}}\right)^2c\left(D_p(1)\times_L D_q(1)\right).
$}
\]
Therefore, $c\left(D_p(1)\times_L D_q(1)\right)= 4$.
\end{proof}

\begin{example}\label{ex: Gaussian}
For every $i=1,\ldots,n$, take $\Phi_i(t)=\frac{e^t-1}{e}$. The function $\Phi_i$ is of particular interest in the theory of Luxemburg and Orlicz normed spaces. It can be shown that the Legendre transform of $\Phi_i$ is given by $\Phi^*_i(t)=t\log t+\frac{1}{e}$. Notice that $(\Phi_i^*)^{-1}(t)=e^{W(t-\frac{1}{e})}$, where $W(t)$ is the Lambert $W$ function. By Theorem \ref{thm: capacity}, we have that  $c(K_{\Phi}\times_L K_{\Phi^*})=4e^{W(1-\frac{1}{e})}\log (e+1)$. See Figure \ref{fig:fig1} and \ref{fig:fig2}. 
\end{example}

\begin{figure}[h]
    \centering
    \begin{subfigure}{0.3\textwidth}
        \centering
        \includegraphics[width=\textwidth]{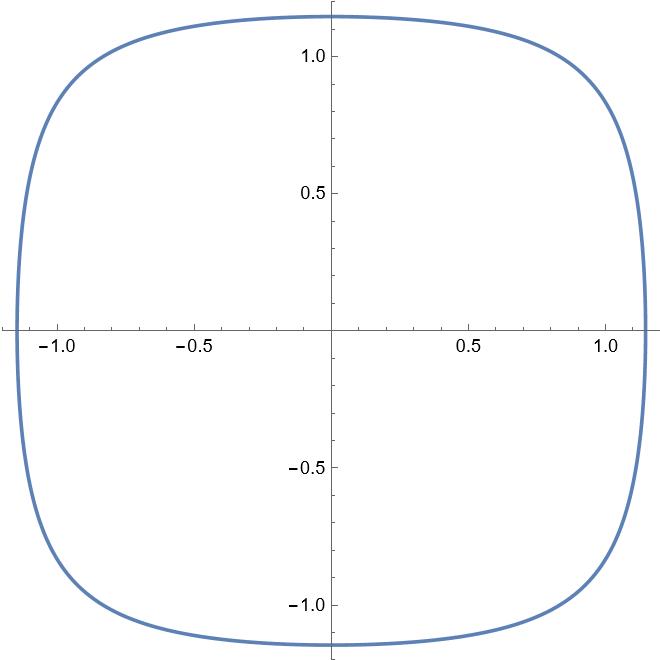}
        \caption{$K_{\Phi}$}
        \label{fig:fig1}
    \end{subfigure}
    \hfill
    \begin{subfigure}{0.3\textwidth}
        \centering
        \includegraphics[width=\textwidth]{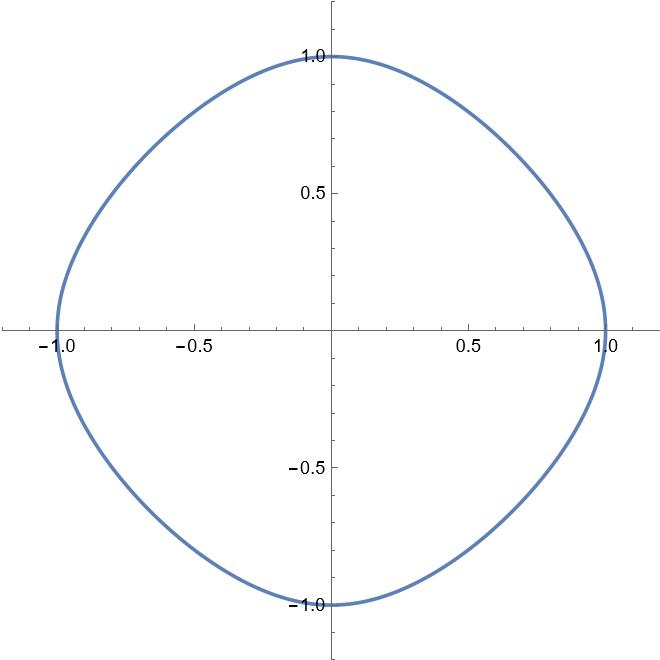}
        \caption{$K_{\Phi^*}$}
        \label{fig:fig2}
    \end{subfigure}
    \hfill
    \begin{subfigure}{0.3\textwidth}
        \centering
        \includegraphics[width=\textwidth]{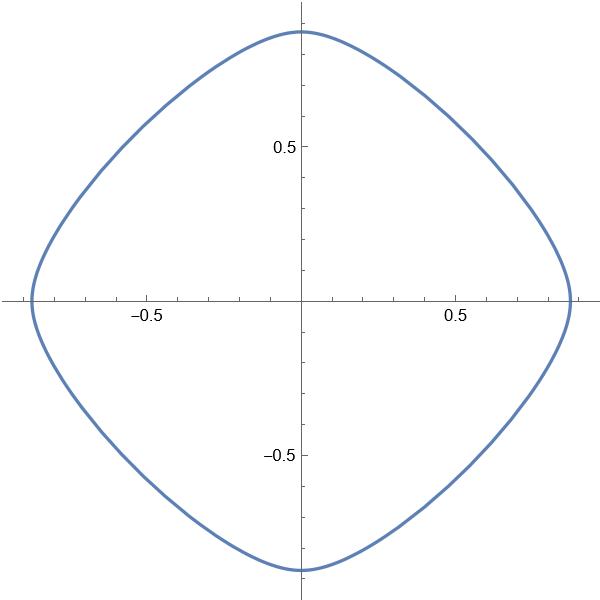}
        \caption{$K_{\Phi}^{\circ}$}
        \label{fig:fig3}
    \end{subfigure}
    \caption{The balls $K_{\Phi},K_{\Phi^*}$ and $K_{\Phi}^{\circ}$, for $\Phi_i(t)=\frac{1}{e}(e^t-1)$, for every $i=1,2$.}\label{fig: Gaussian_Product}
    
\end{figure}

We now state our second main result.
\begin{theorem}\label{thm: capacity2}
Let $c$ be any normalized symplectic capacity. Let $K_{\Phi}\subset \mathbb{R}^n$ be the unit ball of a $\Phi$-norm and $K_{\Phi}^{\circ}\subset \mathbb{R}^n$ its polar dual body. Then we have that
$$2\min_i(\Phi_i^{-1}(1){(\Phi^*_i)}^{-1}(1))\leq c(K_{\Phi}\times_L K_{\Phi}^{\circ})\leq 4.$$
In particular, we have that $c(K_{\Phi}\times_L K_{\Phi}^{\circ})> 2$.
\end{theorem}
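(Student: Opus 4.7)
The plan is to establish the two bounds separately, invoking Theorem~\ref{thm: capacity} as a black box for the dual-functional product.

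For the lower bound, the key observation is that Young's inequality \eqref{eq: young} forces the inclusion $\tfrac{1}{2}K_{\Phi^*}\subset K_\Phi^\circ$. Indeed, for $x\in K_\Phi$ and $y\in K_{\Phi^*}$,
\[
|\langle x,y\rangle|\leq \sum_{i=1}^n|x_i||y_i|\leq \sum_{i=1}^n \Phi_i(|x_i|)+\sum_{i=1}^n \Phi_i^*(|y_i|)\leq 2,
\]
so $y/2\in K_\Phi^\circ$. The linear symplectomorphism $(x,y)\mapsto (x/\sqrt{2},\,\sqrt{2}\,y)$ of $(\mathbb{R}^{2n},\omega_0)$ identifies $K_\Phi\times_L \tfrac{1}{2}K_{\Phi^*}$ with $\tfrac{1}{\sqrt{2}}(K_\Phi\times_L K_{\Phi^*})$; combining monotonicity, conformality, and Theorem~\ref{thm: capacity} then yields
\[
c(K_\Phi\times_L K_\Phi^\circ)\geq c\!\left(K_\Phi\times_L \tfrac{1}{2}K_{\Phi^*}\right)=\tfrac{1}{2}\,c(K_\Phi\times_L K_{\Phi^*})=2\min_i \Phi_i^{-1}(1)(\Phi_i^*)^{-1}(1).
\]
The strict inequality $c(K_\Phi\times_L K_\Phi^\circ)>2$ follows at once from \eqref{eq: lower_ineq_Young} evaluated at $x=1$, which gives $\Phi_i^{-1}(1)(\Phi_i^*)^{-1}(1)>1$ for every $i$.

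For the upper bound, since every normalized capacity is dominated by the cylindrical capacity $c_Z$, it suffices to produce a symplectic embedding $K_\Phi\times_L K_\Phi^\circ\hookrightarrow Z^{2n}(4)$. I would try to adapt the embedding scheme underlying Theorem~\ref{thm: capacity}, replacing Young's inequality by the polar-duality estimate $\sup_{x\in K_\Phi}\langle x,y\rangle\leq 1$ for $y\in K_\Phi^\circ$, and exploiting the unconditional symmetry of $K_\Phi$ to reduce the verification to a planar area computation. The main obstacle is precisely this upper bound: for an arbitrary centrally symmetric convex body $K$, proving $c_Z(K\times_L K^\circ)\leq 4$ is equivalent to the Mahler-restricted Viterbo conjecture and is open, so one must genuinely use the Orlicz structure of $K_\Phi$ — the explicit defining inequality $\sum\Phi_i(|x_i|)\leq 1$ and the corresponding description of $\partial K_\Phi^\circ$ via the support function $h_{K_\Phi}$ — to make such an embedding concrete.
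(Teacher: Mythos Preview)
Your lower bound is correct and is in fact a cleaner packaging of what the paper does. The paper obtains it via Theorem~\ref{thm: embedding2}, which re-runs the embedding construction of Theorem~\ref{thm: embedding} and invokes Young's inequality at the final step to verify that the image lands in $K_\Phi^\circ$; your route uses Young's inequality once to get the inclusion $\tfrac{1}{2}K_{\Phi^*}\subset K_\Phi^\circ$ and then calls Theorem~\ref{thm: capacity} as a black box. The two are the same in content, but yours avoids repeating the construction.

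The upper bound is where the proposal has a genuine gap, and the gap stems from a misconception. You claim that $c_Z(K\times_L K^\circ)\leq 4$ for a general centrally symmetric $K$ is open and tantamount to the Mahler--Viterbo problem; this is backwards. It is the \emph{lower} bound $c_{Gr}(K\times_L K^\circ)\geq 4$ that would imply Mahler via the volume inequality. The upper bound is elementary linear algebra: pick $w\in\partial K$ and $v\in\partial K^\circ$ with $\langle v,w\rangle=1$, choose $B\in GL(n,\mathbb{R})$ with first row $v^T$ and $Bw=e_1$, and observe that the linear symplectomorphism $(x,y)\mapsto(Bx,(B^T)^{-1}y)$ carries $K\times_L K^\circ$ into $[-1,1]^2\times\mathbb{R}^{2n-2}$, hence into $Z^{2n}(4+\varepsilon)$ for every $\varepsilon>0$. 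For $K_\Phi$ it is simpler still, because $K_\Phi$ is unconditional: the coordinate projection to the $(x_i,y_i)$-plane sends $K_\Phi$ onto $[-\Phi_i^{-1}(1),\Phi_i^{-1}(1)]$ and, since the projection of $K^\circ$ onto a line equals the polar of the section of $K$ by that line, sends $K_\Phi^\circ$ onto $[-1/\Phi_i^{-1}(1),1/\Phi_i^{-1}(1)]$. The product is a rectangle of area exactly $4$. This is precisely the ``smart choice of a symplectic plane'' alluded to in the paper's proof, and it uses nothing about the Orlicz structure beyond the invariance of $K_\Phi$ under coordinate sign changes.
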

The most important application of this result is the proof of the strong Viterbo conjecture for a particular class of Lagrangian products.

\begin{cor}
Let $c$ be any normalized symplectic capacity. Let $K_{\Phi}\subset \mathbb{R}^n$ be the unit ball of a $\Phi$-norm and $K_{\Phi}^{\circ}\subset \mathbb{R}^n$ its polar dual body. Assume $\min_i(\Phi_i^{-1}(1){(\Phi^*_i)}^{-1}(1))=\Phi_I^{-1}(1){(\Phi^*_I)}^{-1}(1)$ for some index $I\in\{1,\ldots,n\}$ and $\Phi_I'(1)=1$, then $c(K_{\Phi}\times_L K_{\Phi}^{\circ})=4$.
\end{cor}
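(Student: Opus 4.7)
The plan is to deduce the statement directly from Theorem~\ref{thm: capacity2}, using the hypothesis to collapse its two-sided estimate. Theorem~\ref{thm: capacity2} already gives the upper bound $c(K_\Phi\times_L K_\Phi^\circ)\leq 4$ unconditionally, so the whole task reduces to producing the matching lower bound $c(K_\Phi\times_L K_\Phi^\circ)\geq 4$.

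For this, I invoke the other half of Theorem~\ref{thm: capacity2} at the minimizing index $I$:
\[
c(K_\Phi\times_L K_\Phi^\circ)\;\geq\;2\,\Phi_I^{-1}(1)\,(\Phi_I^*)^{-1}(1),
\]
which reduces everything to the analytic identity $\Phi_I^{-1}(1)\,(\Phi_I^*)^{-1}(1)=2$. Setting $a:=\Phi_I^{-1}(1)$ and $b:=(\Phi_I^*)^{-1}(1)$, so that $\Phi_I(a)=\Phi_I^*(b)=1$, Young's inequality~\eqref{eq: young} applied to the pair $(a,b)$ yields
\[
ab\;\leq\;\Phi_I(a)+\Phi_I^*(b)\;=\;2,
\]
with equality \emph{if and only if} $b=\Phi_I'(a)$. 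Combined with the strict lower estimate~\eqref{eq: lower_ineq_Young}, this already sandwiches $ab$ in $(1,2]$, and it remains to exhibit the Young equality case.

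The last step is where I expect the main obstacle. The hypothesis $\Phi_I'(1)=1$ is a derivative condition at $t=1$, whereas the equality case of Young's inequality at $(a,b)$ requires information at $t=a=\Phi_I^{-1}(1)$ (a point that is, in general, different from $1$). My plan is to bridge the two scales by exploiting the Legendre duality $(\Phi_I^*)'=(\Phi_I')^{-1}$ together with the identity $\Phi_I^*(\Phi_I'(t))=t\,\Phi_I'(t)-\Phi_I(t)$. The fixed-point relation $\Phi_I'(1)=1=(\Phi_I^*)'(1)$ provides a dual anchor at $1$, and chasing the Legendre identity from the level set $\{\Phi_I=1\}$ to the level set $\{\Phi_I^*=1\}$ should pin down $\Phi_I'(a)=b$.

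Once the equality case $b=\Phi_I'(a)$ is established, Young's inequality becomes $ab=\Phi_I(a)+\Phi_I^*(b)=2$, the lower bound in Theorem~\ref{thm: capacity2} becomes $2\cdot 2=4$, and the upper bound already supplied by the same theorem pinches the capacity at $4$. The minimality of $I$ is used only to guarantee that the bound from Theorem~\ref{thm: capacity2} is indeed realized by the good index, so no separate analysis of the remaining $\Phi_i$ is needed.
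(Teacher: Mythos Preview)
Your strategy matches the paper's exactly: invoke Theorem~\ref{thm: capacity2} and reduce to the analytic identity $\Phi_I^{-1}(1)\,(\Phi_I^*)^{-1}(1)=2$, which you correctly recognise is equivalent to the Young equality case $b=\Phi_I'(a)$ at $a=\Phi_I^{-1}(1)$, $b=(\Phi_I^*)^{-1}(1)$. Your instinct that this is the real obstacle is right; unfortunately the bridge you propose via the Legendre relations cannot close it.

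The reason is that the implication $\Phi_I'(1)=1\Rightarrow \Phi_I^{-1}(1)(\Phi_I^*)^{-1}(1)=2$ is \emph{false}. Take $\Phi_I(t)=t^p/p$ with $p>1$: then $\Phi_I'(1)=1$ for every $p$, yet $\Phi_I^{-1}(1)(\Phi_I^*)^{-1}(1)=p^{1/p}q^{1/q}$, which equals $2$ only when $p=2$ (for $p=4$ one gets $4/3^{3/4}\approx 1.75$). Equivalently $\Phi_I'(a)=a^{p-1}=p^{(p-1)/p}\neq q^{1/q}=b$ for $p\neq 2$, so the Young equality case you are trying to establish genuinely fails. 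What your Legendre identity at $t=1$ actually yields is $\Phi_I(1)+\Phi_I^*(1)=1$, i.e.\ information at the point $1$, not at $a$ and $b$. The paper's own one-line proof simply asserts the same implication (``$\Phi_I'(1)=1$ gives equality in~\eqref{eq: young_2}'') without justification, so the gap lies in the route itself rather than in your execution: under the stated hypothesis, Theorem~\ref{thm: capacity2} delivers only $c(K_\Phi\times_L K_\Phi^\circ)\geq 2p^{1/p}q^{1/q}$ in the $t^p/p$ example, strictly below $4$ for $p\neq 2$.
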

\begin{proof}
If $\Phi_I'(1)=1$, it follows from equality in \eqref{eq: young_2}, that $\Phi_I^{-1}(1){(\Phi^*_I)}^{-1}(1)=2$. Therefore, by Theorem \ref{thm: capacity2} we have that 
$$4=2\Phi_I^{-1}(1){(\Phi^*_I)}^{-1}(1)=2\min_i(\Phi_i^{-1}(1){(\Phi^*_i)}^{-1}(1))\leq c(K_{\Phi}\times_L K_{\Phi}^{\circ})\leq 4.$$
The claim follows.
\end{proof}

\begin{remark}
Notice that rescaling the $n$-tuple $\Phi$ by a constant $c>0$ will change the domain $K_{\Phi}$ into another domain $K_{c\Phi}$, which is not necessarily a rescaling of the $K_{\Phi}$. This is easily seen for example, for the 2-tuple $\Phi(t)=(\Phi_1(t),\Phi_2(t)):=(t^2,e^t-1)$. See Figure \ref{fig: remark_before} for the Lagrangian product $K_{\Phi}\times_L K_{\Phi}^{\circ}$. Notice in this case, $\min_i(\Phi_i^{-1}(1){(\Phi^*_i)}^{-1}(1))=\Phi_2^{-1}(1){(\Phi^*_2)}^{-1}(1)$ and $\Phi_2'(1)=e$. So, we can consider the rescaling of $\Phi$ given by $\overline{\Phi}(t)=(\overline{\Phi}_1(t),\overline{\Phi}_2(t)):=\left(\frac{1}{e}t^2,\frac{1}{e}(e^t-1)\right)$. See Figure \ref{fig: remark_after} for the Lagrangian product $K_{\overline{\Phi}}\times_L K_{\overline{\Phi}}^{\circ}$.
\end{remark}

\begin{figure}[h]
    \centering
    \begin{subfigure}{0.4\textwidth}
        \centering
        \includegraphics[width=\textwidth]{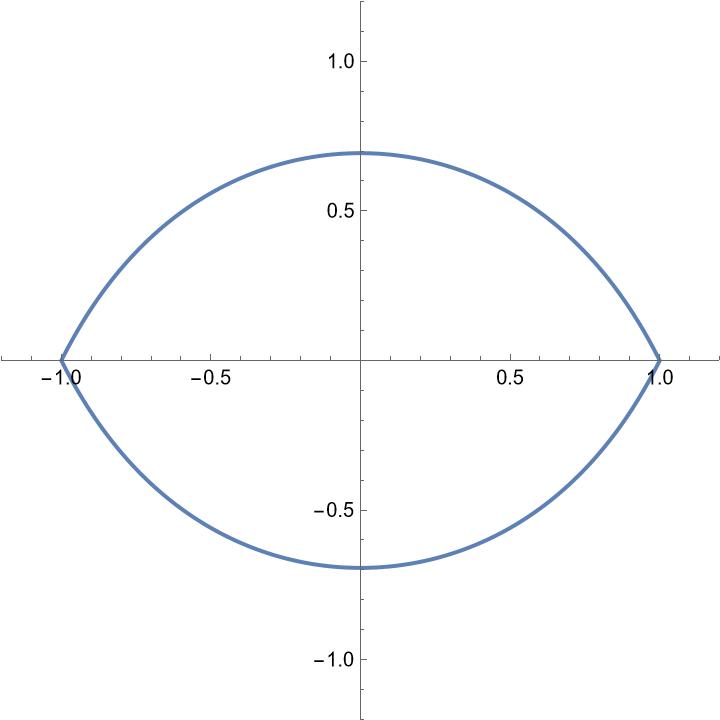}
        \caption{$K_{\Phi}$}
    \end{subfigure}
    \hfill
    \begin{subfigure}{0.4\textwidth}
        \centering
        \includegraphics[width=\textwidth]{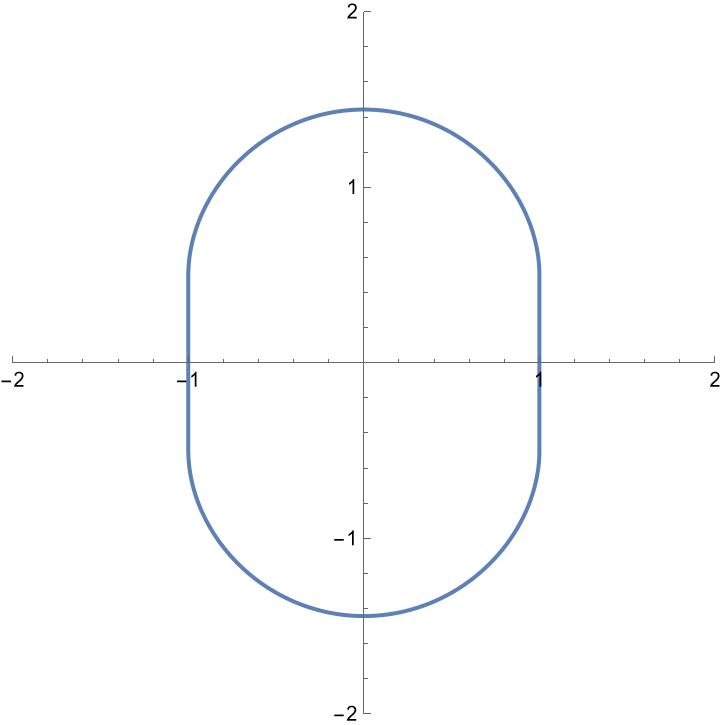}
        \caption{$K_{\Phi}^{\circ}$}
    \end{subfigure}
    \caption{The Lagrangian product $K_{\Phi}\times K_{\Phi}^{\circ}$ for $\Phi(t)=(\Phi_1(t),\Phi_2(t)):=(t^2,e^t-1)$.}\label{fig: remark_before}
    
\end{figure}
\begin{figure}[h]
    \centering
    \begin{subfigure}{0.4\textwidth}
        \centering
        \includegraphics[width=\textwidth]{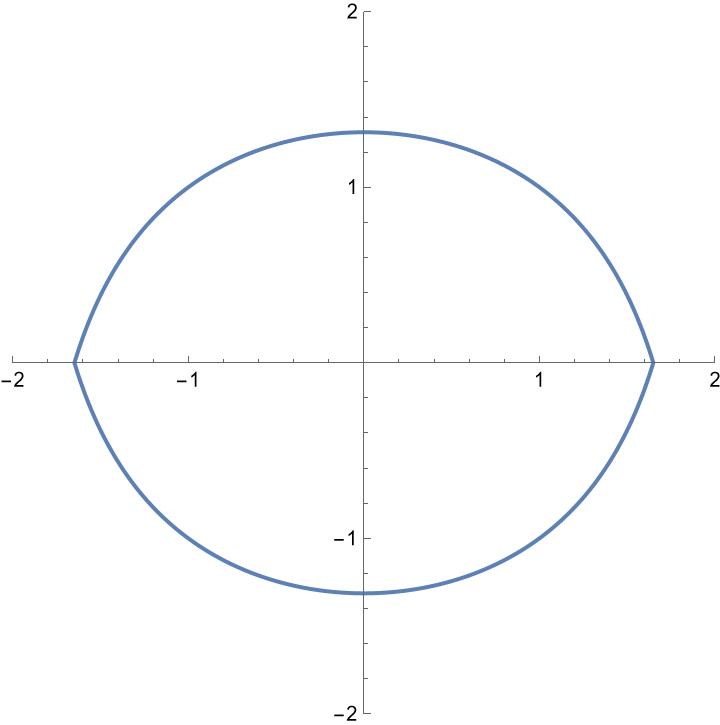}
        \caption{$K_{\overline{\Phi}}$}
    \end{subfigure}
    \hfill
    \begin{subfigure}{0.4\textwidth}
        \centering
        \includegraphics[width=\textwidth]{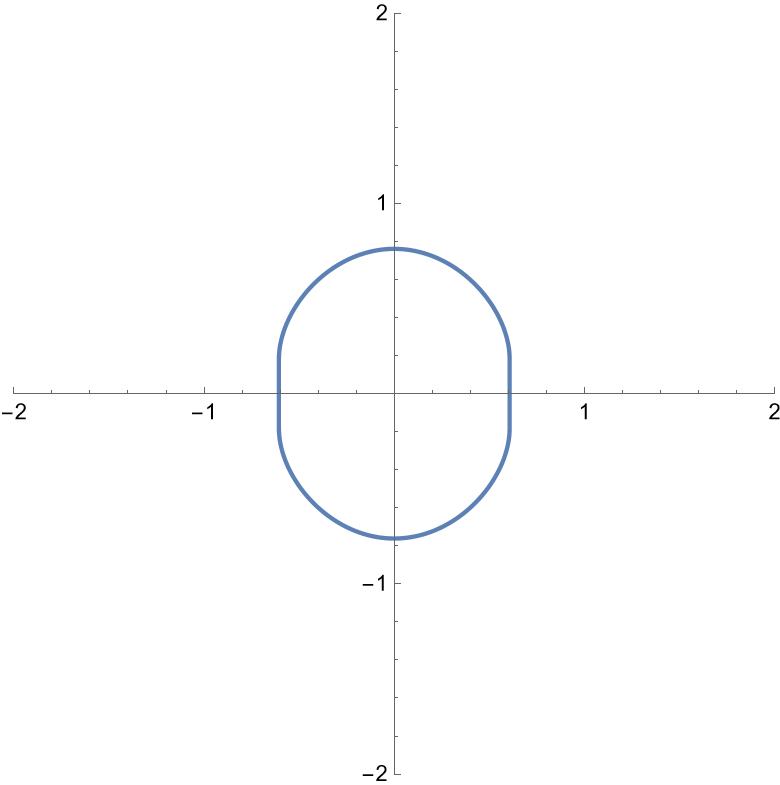}
        \caption{$K_{\overline{\Phi}^*}$}
    \end{subfigure}
    \caption{The Lagrangian product $K_{\overline{\Phi}}\times K_{\overline{\Phi}}^{\circ}$ for $\overline{\Phi}(t)=(\overline{\Phi}_1(t),\overline{\Phi}_2(t)):=\left(\frac{1}{e}t^2,\frac{1}{e}(e^t-1)\right)$.}\label{fig: remark_after}
    
\end{figure}

We see in the next result, sufficient conditions that $\Phi$ should satisfy, when $\Phi$ is of polynomial type, in order for the strong Viterbo conjecture to hold. 
\begin{cor}\label{cor: polynomial}
Let $c$ be any normalized symplectic capacity. Let $K_{\Phi}\subset \mathbb{R}^n$ be the unit ball of a $\Phi$-norm, where for every $i=1\ldots,n$, we have that $\Phi_i(t)=a_kt^k+\ldots+a_1t$ is a polynomial, with $a_1,\ldots, a_k\in \mathbb{R}$, such that $a_1\geq 0, \sum_{i=1}^kia_i=1$ and $\Phi_i''(t)\geq 0$, for all $t\geq 0$. Then $c(K_{\Phi}\times_L K_{\Phi}^{\circ})=4$.
\end{cor}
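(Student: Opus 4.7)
The plan is to reduce the statement to the previous corollary by verifying its hypotheses for every index $i$. The key computation is that for a polynomial $\Phi_i(t) = a_k t^k + \ldots + a_1 t + a_0$, one has
\[
\Phi_i'(1) = k a_k + (k-1) a_{k-1} + \ldots + 2 a_2 + a_1 = \sum_{j=1}^{k} j a_j,
\]
so the coefficient hypothesis (naturally read as $\sum_{j=1}^{k} j a_j = 1$) is exactly the statement $\Phi_i'(1) = 1$ for each $i = 1, \ldots, n$.

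Before invoking the previous corollary, I would first verify that each $\Phi_i$ is a valid Young function, under the implicit convention $a_0 = 0$: smoothness is automatic; convexity on $[0,\infty)$ is the hypothesis $\Phi_i''(t) \geq 0$; combined with $\Phi_i'(0) = a_1 \geq 0$ and the monotonicity of $\Phi_i'$ that follows from $\Phi_i'' \geq 0$, this yields $\Phi_i'(t) \geq 0$ on $[0,\infty)$, so $\Phi_i$ is non-decreasing; finally, since $\Phi_i'(1) = 1 > 0$ forces $\Phi_i$ to be non-constant, one has $\Phi_i(t) \to \infty$ as $t \to \infty$.

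With $\Phi_i'(1) = 1$ for every $i$, the argument from the proof of the previous corollary (equality case in \eqref{eq: young_2}) gives $\Phi_i^{-1}(1)(\Phi_i^*)^{-1}(1) = 2$ for \emph{every} $i$. Hence $\min_i \Phi_i^{-1}(1)(\Phi_i^*)^{-1}(1) = 2$ and this minimum is attained by every index, so the hypothesis $\Phi_I'(1) = 1$ of the previous corollary is automatically satisfied by any choice of $I$. Applying that corollary yields $c(K_\Phi \times_L K_\Phi^\circ) = 4$.

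In truth there is no genuine obstacle here: the polynomial hypotheses are engineered precisely so that the previous corollary applies. The only points requiring real care are unpacking the slightly overloaded notation in the coefficient condition and confirming that $\Phi_i$ qualifies as a Young function in the paper's sense; both are routine.
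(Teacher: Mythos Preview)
Your proposal is correct and follows exactly the paper's approach: verify that each $\Phi_i$ is a Young function via $\Phi_i'(0)=a_1\geq 0$ and $\Phi_i''\geq 0$, compute $\Phi_i'(1)=\sum_j j a_j=1$, and then invoke the preceding corollary. Your write-up is in fact more careful than the paper's (which has a typo referencing itself and overloads the index $i$), and your observation that $\Phi_i'(1)=1$ for \emph{every} $i$ makes the minimum hypothesis trivially satisfied is a useful clarification the paper leaves implicit.
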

\begin{proof}
Notice that $\Phi_i'(0)=a_1\geq 0$ and $\Phi_i''(t)\geq 0$, imply that $\Phi_i(t)$ is non-decreasing and convex, for all $t\geq 0$. Furthermore, $\Phi_i'(1)=\sum_{i=1}^kia_i=1$. Hence, by Corollary \ref{cor: polynomial}, the claim follows.
\end{proof}

\begin{example}
Note that $\Phi_i$ from Example \ref{ex: Gaussian} satisfy $\Phi_i'(1)=1$. By Theorem \ref{thm: capacity2}, we have that  $c(K_{\Phi}\times K_{\Phi}^{\circ})=4$. 
\end{example}

\begin{remark}
In the spirit of Theorems \ref{thm: capacity} and \ref{thm: capacity2}, there is one more notion of duality in Lagrangian products, induced by Young functions. That is, we can consider the Lagrangian product $K_{\Phi}\times_LK_{\Phi^*}^{\circ}$, given by taking the second factor to be the polar dual body of the unit ball for the Legendre transform $\Phi^*$ of $\Phi$. We think it would also be interesting to understand the behaviour of normalized symplectic capacities in this class of domains. However, we do not know if the techniques used in this paper apply directly to this case.
\end{remark}

%\begin{theorem}\label{thm: capacity3}
%Let $c$ be any normalized symplectic capacity. Let $K_{\Phi}\subset \mathbb{R}^n$ be the unit ball of a $\Phi$-norm and $K_{\Phi^*}^{\circ}\subset \mathbb{R}^n$ be the polar dual body of the unit ball $K_{\Phi^*}$ of the $\Phi^*$-norm. Then
%$$c(K_{\Phi}\times_L K_{\Phi^*}^{\circ})=4\min_i(\Phi_i^{-1}(1){(\Phi^*_i)}^{-1}(1)).$$
%In particular, for any Young function $\Phi$, we have that $c(K_{\Phi}\times_L K_{\Phi^*})\leq 8$.
%\end{theorem}

\subsection{Idea of the proof}

The idea of the proofs of Theorems \ref{thm: capacity} and \ref{thm: capacity2} is as follows, for the upper bound we use a standard argument found in \citep{ArtsteinAvidan2013FromSM}, where we construct a symplectic projection map and then use Gromov's Non-Squeezing Theorem. For the lower bound, we construct a symplectic embedding of balls of capacities $4\min_i(\Phi_i^{-1}(1){(\Phi^*_i)}^{-1}(1))$ and $2\min_i(\Phi_i^{-1}(1){(\Phi^*_i)}^{-1}(1))$, into $K_{\Phi}\times K_{\Phi^*}$ and $K_{\Phi}\times K_{\Phi}^{\circ}$, respectively. This will give us the desired lower bound. This is the content of the following two results.

\begin{theorem}\label{thm: embedding}
Let $K_{\Phi}\subset \mathbb{R}^n$ be the unit ball of a $\Phi$-norm and $K_{\Phi^*}\subset \mathbb{R}^n$ be the unit ball of the $\Phi^*$-norm. Let $c:=4\min_i(\Phi_i^{-1}(1){(\Phi^*_i)}^{-1}(1))$. Then, for any $\varepsilon>0$ there exists a symplectic embedding of  $B^{4}(c)$ into $(1+O(\varepsilon))(K_{\Phi}\times_L K_{\Phi^*})$. 
\end{theorem}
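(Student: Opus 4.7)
Assume without loss of generality that the minimum $\min_i \Phi_i^{-1}(1)(\Phi_i^*)^{-1}(1)$ is attained at $i=1$, and write $\alpha := \Phi_1^{-1}(1)$, $\beta := (\Phi_1^*)^{-1}(1)$, so that $c = 4\alpha\beta$. When $n=1$ the target is two-dimensional and the statement is vacuous, so assume $n \geq 2$. Restricting to the four-dimensional symplectic subspace $\{x_i = y_i = 0 : i \geq 3\}$ cuts out of $K_\Phi \times_L K_{\Phi^*}$ the four-dimensional Lagrangian product $D \times_L D^*$ with
\[
D = \{(x_1, x_2) : \Phi_1(|x_1|) + \Phi_2(|x_2|) \leq 1\}, \qquad D^* = \{(y_1, y_2) : \Phi_1^*(|y_1|) + \Phi_2^*(|y_2|) \leq 1\},
\]
so it suffices to construct a symplectic embedding $B^4(c) \hookrightarrow (1+O(\varepsilon))(D \times_L D^*)$; the trivial extension by zero in the coordinates $i \geq 3$ then lands inside $(1+O(\varepsilon))(K_\Phi \times_L K_{\Phi^*})$.

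Parametrise $B^4(c)$ by action-angle coordinates $(s_1, s_2, \theta_1, \theta_2)$ with $s_j \geq 0$, $s_1 + s_2 \leq c$, $\theta_j \in \mathbb{R}/\mathbb{Z}$, and symplectic form $\sum_{j=1}^2 ds_j \wedge d\theta_j$. The plan is to send each Liouville torus $\{s_1, s_2 \text{ fixed}\}$ to a product Lagrangian torus of the form $\{|x_j| = a_j,\ |y_j| = b_j : j = 1, 2\}$ inside $D \times_L D^*$, where the radii $a_j = a_j(s_1, s_2)$, $b_j = b_j(s_1, s_2)$ satisfy $4 a_j b_j = s_j$ and both Lagrangian coupling constraints $\Phi_1(a_1) + \Phi_2(a_2) \leq 1$ and $\Phi_1^*(b_1) + \Phi_2^*(b_2) \leq 1$. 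On each such torus, a Moser-type area-preserving parametrisation of the perimeter identifies the angle circle $\theta_j$ with the boundary curve enclosing symplectic area exactly $s_j$, and varying the radii smoothly in $(s_1, s_2)$ yields a symplectic embedding on the open dense locus $s_1 s_2 > 0$ that extends by continuity to the whole ball.

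The crux of the argument is the existence of such radii for every $(s_1, s_2)$ with $s_1 + s_2 \leq c$. Saturating the Lagrangian constraints with a parameter $(u, v) \in [0, 1]^2$ via $\Phi_1(a_1) = u$, $\Phi_2(a_2) = 1-u$, $\Phi_1^*(b_1) = v$, $\Phi_2^*(b_2) = 1-v$ reduces this to showing that the moment-type map
\[
F(u, v) := \bigl(4\Phi_1^{-1}(u)(\Phi_1^*)^{-1}(v),\ 4\Phi_2^{-1}(1-u)(\Phi_2^*)^{-1}(1-v)\bigr)
\]
has image covering the simplex $\{(s_1, s_2) \geq 0 : s_1 + s_2 \leq c\}$ up to $O(\varepsilon)$. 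The minimality hypothesis $c = 4\min_i \Phi_i^{-1}(1)(\Phi_i^*)^{-1}(1)$ enters exactly here: on the diagonal $u = v$, the Young inequality \eqref{eq: young_2} together with the strict lower bound \eqref{eq: lower_ineq_Young} forces the curve $u \mapsto F(u,u)$ to lie on or beyond the hypotenuse $s_1 + s_2 = c$, while along the edges of $[0,1]^2$ the map $F$ sweeps the coordinate segments $[0, c]\times\{0\}$ and $\{0\}\times[0, c]$; a degree or convexity argument then delivers the simplex as a subset of $F([0,1]^2)$. The $O(\varepsilon)$ slack in the conclusion absorbs the smoothing required near the degenerate boundary $s_1 s_2 = 0$ of the action triangle and the relaxation from strict to non-strict Lagrangian inequalities.
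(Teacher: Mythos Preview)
Your construction has two genuine gaps. First, the claim that the diagonal curve $u\mapsto F(u,u)$ lies on or beyond the hypotenuse $s_1+s_2=c$ is not supported by the inequalities you cite: writing $h_i(t)=\Phi_i^{-1}(t)(\Phi_i^*)^{-1}(t)$, the bounds \eqref{eq: lower_ineq_Young} and \eqref{eq: young_2} give only $t<h_i(t)\le 2t$, hence $F_1(u,u)+F_2(u,u)=4h_1(u)+4h_2(1-u)>4$, whereas $c=4h_1(1)$ may be any value in $(4,8]$. A degree argument does not rescue this either, since the boundary of $[0,1]^2$ maps under $F$ entirely into the two coordinate axes (each edge collapses to a segment on an axis), so the image of $\partial[0,1]^2$ has winding number zero about every interior point of the simplex. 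Second, and more structurally: once the radii $a_j,b_j$ depend on \emph{both} actions, the map is no longer a product of planar area-preserving maps, and neither symplecticity nor injectivity follows from the torus-wise area condition $4a_jb_j=s_j$. The rectangle boundaries in a fixed $(x_j,y_j)$-plane corresponding to different values of $(s_1,s_2)$ can cross, so your tori need not foliate any region, and the pulled-back form acquires $ds_2\wedge d\theta_1$ cross-terms that the ``Moser-type parametrisation'' does not control.

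The paper avoids both problems by keeping the map a genuine product $\sigma_1\times\cdots\times\sigma_n$ of \emph{independent} planar area-preserving embeddings $\sigma_i:B^2(c_i)\to\mathbb{R}^2$, with $c_i=4\Phi_i^{-1}(1)(\Phi_i^*)^{-1}(1)$, chosen so that $\Phi_i(|x(\sigma_i(z))|)$ and $\Phi_i^*(|y(\sigma_i(z))|)$ are each below $\pi|z|^2/c_i+\varepsilon/n$. Symplecticity and injectivity are then automatic, and landing in $(1+O(\varepsilon))(K_\Phi\times_L K_{\Phi^*})$ follows by summing: $\sum_i\Phi_i(|x_i|)\le\sum_i\pi|z_i|^2/c_i+\varepsilon\le \pi|z|^2/c+\varepsilon\le 1+\varepsilon$, the middle step using precisely $c=\min_ic_i\le c_i$. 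No two-parameter covering problem arises, the reduction to $n=2$ is unnecessary, and the argument in fact embeds $B^{2n}(c)$ rather than only $B^4(c)$.
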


\begin{theorem}\label{thm: embedding2}
Let $K_{\Phi}\subset \mathbb{R}^n$ be the unit ball of a $\Phi$-norm. Let $c:=4\min_i(\Phi_i^{-1}(1){(\Phi^*_i)}^{-1}(1))$. Then, for any $\varepsilon>0$ there exists a symplectic embedding of  $B^{4}(c)$ into $\sqrt{2}(1+O(\varepsilon))(K_{\Phi}\times_L K_{\Phi}^{\circ})$. 
\end{theorem}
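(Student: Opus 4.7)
The plan is to deduce Theorem \ref{thm: embedding2} directly from Theorem \ref{thm: embedding} by combining a linear symplectic rescaling with an elementary containment between $K_{\Phi^*}$ and $K_\Phi^\circ$ that falls out of Young's inequality. Since Theorem \ref{thm: embedding} already produces a symplectic embedding $B^4(c) \hookrightarrow (1+O(\varepsilon))(K_\Phi \times_L K_{\Phi^*})$ with exactly the right capacity $c = 4\min_i(\Phi_i^{-1}(1)(\Phi_i^*)^{-1}(1))$, the task reduces to finding a linear symplectomorphism that maps a small inflation of $K_\Phi \times_L K_{\Phi^*}$ into $\sqrt{2}\,(K_\Phi \times_L K_\Phi^\circ)$.

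The first step is the inclusion $K_{\Phi^*} \subseteq 2\,K_\Phi^\circ$. For $y \in K_{\Phi^*}$ and $x \in K_\Phi$, applying \eqref{eq: young} componentwise gives
\[
|\langle x,y\rangle| \leq \sum_{i=1}^n |x_i y_i| \leq \sum_{i=1}^n \Phi_i(|x_i|) + \sum_{i=1}^n \Phi_i^*(|y_i|) \leq 2,
\]
so that $y/2 \in K_\Phi^\circ$. The second step is to apply the linear map $\psi(x,y) := (\sqrt{2}\,x,\, y/\sqrt{2})$, which is a linear symplectomorphism of $(\mathbb{R}^{2n},\omega_0)$. Under $\psi$, the domain $K_\Phi \times_L K_{\Phi^*}$ is sent onto $\sqrt{2}\,K_\Phi \times_L \tfrac{1}{\sqrt{2}}K_{\Phi^*}$, and rescaling the inclusion of the first step by $1/\sqrt{2}$ gives $\tfrac{1}{\sqrt{2}}K_{\Phi^*} \subseteq \sqrt{2}\,K_\Phi^\circ$. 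Consequently,
\[
\psi\bigl((1+O(\varepsilon))(K_\Phi \times_L K_{\Phi^*})\bigr) \subseteq \sqrt{2}\,(1+O(\varepsilon))(K_\Phi \times_L K_\Phi^\circ).
\]
Composing the embedding furnished by Theorem \ref{thm: embedding} with $\psi$ and with the above inclusion of sets yields the desired embedding $B^4(c) \hookrightarrow \sqrt{2}\,(1+O(\varepsilon))(K_\Phi \times_L K_\Phi^\circ)$.

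The main subtle point is tracking the constants through the rescaling: the factor $2$ produced by Young's inequality is exactly absorbed by the specific choice $\alpha = \sqrt{2}$ in $\psi$, which distributes as $\sqrt{2}$ on the $x$-factor and $1/\sqrt{2}$ on the $y$-factor. Any other positive choice of $\alpha$ would still give an embedding into \emph{some} rescaling of $K_\Phi \times_L K_\Phi^\circ$, but this symmetric distribution produces precisely the $\sqrt{2}$ factor appearing in the statement. No new symplectic construction is needed beyond Theorem \ref{thm: embedding}; the result is a formal consequence of Young's inequality and the $\textup{GL}$-invariance of $\omega_0$ under $\psi$.
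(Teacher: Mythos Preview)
Your proof is correct and arrives at the same conclusion via essentially the same mechanism as the paper---Young's inequality to control the dual pairing, followed by the linear symplectomorphism $(x,y)\mapsto(\sqrt{2}\,x,\,y/\sqrt{2})$---but you organize the argument differently. The paper reruns the explicit construction of the maps $\sigma_1\times\cdots\times\sigma_n$ from Theorem~\ref{thm: embedding} and directly estimates $\sum_i\Phi_i(|x_i|)\le 1+\varepsilon$ and $|\langle\mathbf{y},\mathbf{x}\rangle|\le 2+\varepsilon$ for $\mathbf{x}\in K_\Phi$, landing the image in (roughly) $K_\Phi\times 2K_\Phi^\circ$. You instead invoke Theorem~\ref{thm: embedding} as a black box and then apply the set-theoretic inclusion $K_{\Phi^*}\subseteq 2K_\Phi^\circ$, which is exactly the content of that dual-pairing estimate. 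Your route is more modular and avoids repeating the $\sigma_i$ construction; it also makes explicit the rescaling $\psi$ needed to pass from $K_\Phi\times 2K_\Phi^\circ$ to $\sqrt{2}(K_\Phi\times K_\Phi^\circ)$, a step the paper leaves implicit. Neither approach yields a sharper constant than the other.
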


%\begin{theorem}\label{thm: embedding3}
%Let $K_{\Phi}\subset \mathbb{R}^n$ be the unit ball of a $\Phi$-norm and $K_{\Phi^*}\subset \mathbb{R}^n$ be the unit ball of the $\Phi^*$-norm. Then, for any $\varepsilon>0$ there exists a symplectic embedding of  $B^{4}(c)$ into $\sqrt{2}(1+O(\varepsilon))(K_{\Phi}\times_L K_{\Phi^*}^{\circ})$. 
%\end{theorem}

The construction of these embeddings follows an idea first exposed in \cite{Latschev2011TheGW} and also used in \cite{Karasev2019MahlersCF}. We construct the symplectic embedding by pairing together certain area preserving embeddings.

\noindent{\bf Structure of the paper:} In Section \ref{sec: construction}, we prove Theorems \ref{thm: embedding} and \ref{thm: embedding2}. In Section \ref{sec: capacity}, we give the proof of Theorems \ref{thm: capacity} and \ref{thm: capacity2}. In Section \ref{sec: inequality} we give a self-contained proof of the inequality in \eqref{eq: lower_ineq_Young}.

\noindent{\bf Acknowledgements:} The author was supported by the ISF Grant No. 2445/20.

\section{Construction of the embedding}\label{sec: construction}
In this section we show the proofs of Theorems \ref{thm: embedding} and \ref{thm: embedding2}, that is, we construct a symplectic embedding of a ball into $K_{\Phi}\times K_{\Phi^*}$.  
  
\begin{proof}[Proof of Theorem \ref{thm: embedding}]
Let $\varepsilon>0$ and $c_i=4\Phi_i^{-1}(1){(\Phi^*_i)}^{-1}(1)$, for every $i=1,\ldots,n$.  Notice that $c_i\leq 8$ for every $i$, by \eqref{eq: young_2}. Choose a family of area preserving embeddings 
$$\sigma_i:B^2(c_i)\to (-\Phi_i^{-1}(1)-\varepsilon,\Phi_i^{-1}(1)+\varepsilon) \times\left(-{(\Phi^*_i)}^{-1}(1)-\varepsilon,{(\Phi^*_i)}^{-1}(1)+\varepsilon\right),$$
such that:
\begin{enumerate}[label=\roman*.]
\item \label{eq: ineq1} $\Phi_i(|x(\sigma_i(z))|)<\frac{1}{c_i}\pi|z|^2+\frac{\varepsilon}{n}, \hspace{5mm} \forall z\in B^2(4),$
\item \label{eq: ineq2} $\Phi^*_i(|y(\sigma_i(z))|)<\frac{1}{c_i}\pi|z|^2+\frac{\varepsilon}{n}, \hspace{5mm} \forall z\in B^2(4),$
\end{enumerate}
for every $i=1,\ldots,n$.

These inequalities can be achieved by area preserving maps because they mean that the disc of radius $r$, centered at the
origin, must be mapped into a rectangle of area slightly larger than $\pi r^2$. Indeed, using the assumptions above and the fact that both $\Phi_i$ and $\Phi^*_i$ are increasing, we have the inequality
$$|x(\sigma_i(z))y(\sigma_i(z))|\leq \Phi_i^{-1}\left(\frac{1}{c_i}\pi|z|^2+\frac{\varepsilon}{n}\right){(\Phi^*_i)}^{-1}\left(\frac{1}{c_i}\pi|z|^2+\frac{\varepsilon}{n}\right).$$
By the bounds in \eqref{eq: young_2} and \eqref{eq: lower_ineq_Young}, we have that 
$$\frac{1}{c_i}\pi|z|^2+\frac{\varepsilon}{n}<\Phi_i^{-1}\left(\frac{1}{c_i}\pi|z|^2+\frac{\varepsilon}{n}\right){(\Phi^*_i)}^{-1}\left(\frac{1}{c_i}\pi|z|^2+\frac{\varepsilon}{n}\right)\leq 2\left(\frac{1}{c_i}\pi|z|^2+\frac{\varepsilon}{n}\right),$$
which implies that 
$$4|x(\sigma_i(z))y(\sigma_i(z))|\leq 2\left(\frac{4}{c_i}\pi|z|^2+4\frac{\varepsilon}{n}\right),$$
which proves the consistency of the areas. The precise construction of such embeddings can be found in \cite{Schlenk2001ESchlembeddingPI} and \cite{Schlenk2005ESchlembeddingPI}.

\begin{figure}[htbp] % h: here, t: top, b: bottom, p: page of floats
    \centering % Center the figure
\begin{tikzpicture}

% Left plot (Euclidean norm level sets)
\begin{scope}
\foreach \r/\c [count=\i] in {2.0/none, 1.4/gray!20, 0.8/gray!60, 0.4/gray!100} {
  \ifnum\i=1
    \draw (0,0) circle (\r); % outermost circle: solid, transparent fill
  \else
    \draw[fill=\c, draw=none] (0,0) circle (\r);
    \draw[dashed] (0,0) circle (\r);
  \fi
}
\draw (0,0) node[circle,inner sep=1.5pt,fill=black] {};
\draw[->] (-2.5,0) -- (2.5,0) node at (2.6,0.2) {$x$} node at (2.15,0.3) {$1$};
\draw[->] (0,-2.5) -- (0,2.5) node at (0.2,2.6) {$y$};

\end{scope}

% Arrow
\draw[->,thick] (3,0) -- (4.5,0) node at (3.8,0.2) {$\sigma_i$};

% Right plot (vertically elongated rectangles, taller y-axis)
\begin{scope}[shift={(7.5,0)}]
\foreach \r/\c [count=\i] in {2.0/none, 1.4/gray!20, 0.8/gray!60, 0.4/gray!100} {
  \pgfmathsetmacro{\yfactor}{1.2}
  \pgfmathsetmacro{\ry}{\r*\yfactor}
  \ifnum\i=1
    \draw (-\r,-\ry) rectangle (\r,\ry); % outermost shape: solid, no fill
  \else
    \draw[fill=\c, draw=none, rounded corners=0.3cm] (-\r,-\ry) rectangle (\r,\ry);
    \draw[dashed, rounded corners=0.3cm] (-\r,-\ry) rectangle (\r,\ry);
  \fi
}
\draw (0,0) node[circle,inner sep=1.5pt,fill=black] {};
% Taller y-axis
\draw[->] (-2.5,0) -- (2.5,0) node at (2.6,0.2) {$x$} node at (3.2,-0.35) {$\Phi_i^{-1}(1+\varepsilon/n)$};
\draw[->] (0,-2.7) -- (0,2.7) node at (0.2,2.8) {$y$} node at (-1.35,2.7) {${(\Phi^*_i)}^{-1}(1+\varepsilon/n)$}; % extended y-axis
\end{scope}

\end{tikzpicture}
\caption{The map $\sigma_i$.} % Updated caption
    
\end{figure}
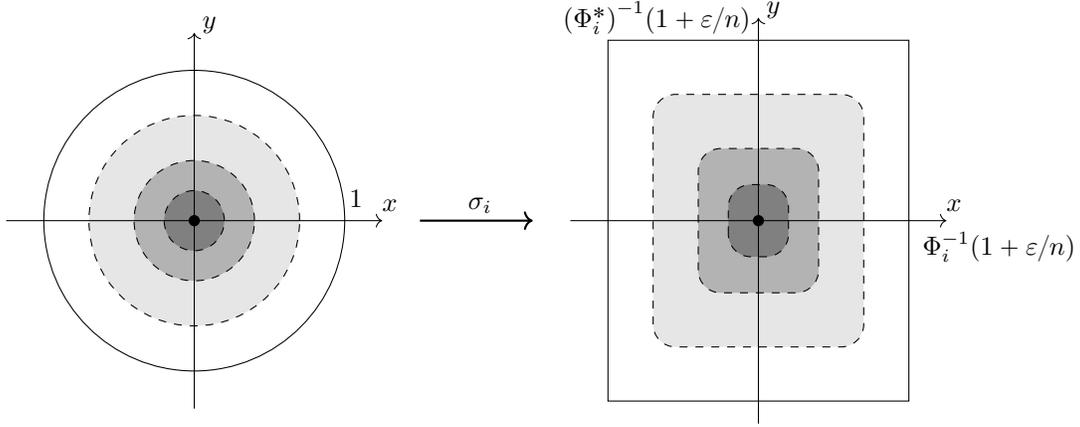
We notice now that the symplectic embedding $\sigma_1\times\ldots\times\sigma_n: B^2(c_1) \times\ldots\times B^2(c_n) \to \mathbb{R}^{2n}$ maps $B^{2n}(c)$ into $(1+O(\varepsilon))(K_{\Phi}\times_L K_{\Phi^*})$. Indeed, for $(z_1,\ldots,z_n)\in B^{2n}(c)$ we have $\pi(|z_1|^2+\ldots+|z_n|^2) \leq c$. Together with item \eqref{eq: ineq1} we can estimate
\begin{equation*}
\begin{split}
 \sum_{i=1}^n\Phi_i(|x_i\left((\sigma_1\times\ldots\times \sigma_n)(z_1,\ldots,z_n)\right)|)&=\sum_{i=1}^n\Phi_i(|x_i(\sigma(z_i))|),\\
 &\leq \frac{\pi}{c_i}|z_1|^2+\ldots+\frac{\pi}{c_n}|z_n|^2+\varepsilon,\\
&\leq \frac{\pi}{c}(|z_1|^2+\ldots+|z_n|^2)+\varepsilon,\\
&\leq 1+\varepsilon.
\end{split}
\end{equation*}
Similarly, using item \eqref{eq: ineq2} we can estimate
\begin{equation*}
\begin{split}
 \sum_{i=1}^n\Phi^*_i(|y_i\left((\sigma_1\times\ldots\times \sigma_n)(z_1,\ldots,z_n)\right)|)&=\sum_{i=1}^n\Phi^*_i(|y_i(\sigma(z_i))|),\\
  &\leq \frac{\pi}{c_i}|z_1|^2+\ldots+\frac{\pi}{c_n}|z_n|^2+\varepsilon,\\
&\leq\frac{\pi}{c}(|z_1|^2+\ldots+|z_n|^2)+\varepsilon,\\
&\leq 1+\varepsilon.
\end{split}
\end{equation*}
This concludes the proof.
\end{proof}

\begin{proof}[Proof of Theorem \ref{thm: embedding2}]
The proof carries out almost exactly as the proof of Theorem \ref{thm: embedding}. We choose, the symplectic embedding $\sigma_1\times\ldots \times \sigma_n$ in the same manner.

We notice now that the symplectic embedding $\sigma_1\times\ldots\times\sigma_n: B^2(c_1) \times\ldots\times B^2(c_n) \to \mathbb{R}^{2n}$ maps $B^{2n}(c)$ into $\sqrt{2}(1+O(\varepsilon))(K_{\Phi}\times_L K_{\Phi}^{\circ})$. Indeed, for $(z_1,\ldots,z_n)\in B^{2n}(c)$ we have $\pi(|z_1|^2+\ldots+|z_n|^2) \leq c$. We can estimate, again, that
\begin{equation*}
\sum_{i=1}^n\Phi_i(|x_i\left((\sigma_1\times\ldots\times \sigma_n)(z_1,\ldots,z_n)\right)|)\leq 1+\varepsilon
\end{equation*}
Let $\mathbf{x}=(x_1,\ldots,x_n)\in K_{\Phi}$, using item \eqref{eq: ineq2} and Young's inequality in \eqref{eq: young}, we can estimate
\begin{equation*}
\begin{split}
 |\langle\mathbf{y}(\left(\sigma_1\times\ldots\times \sigma_n)(z_1,\ldots,z_n)\right),\mathbf{x}\rangle|&=\bigg|\sum_{i=1}^n y_i(\sigma_i(z_i))x_i\bigg|,\\
&\leq\sum_{i=1}^n |y_i(\sigma_i(z_i))x_i|,\\
&\leq \sum_{i=1}^n \left(\Phi_i^*(|y_i(\sigma_i(z_i))|)+\Phi_i(|x_i|)\right),\\
&\leq \sum_{i=1}^n\frac{\pi}{c_i}|z_i|^2+\varepsilon+\sum_{i=1}^n \Phi_i(|x_i|),\\
& \leq\frac{\pi}{c}(|z_1|^2+\ldots+|z_n|^2)+\varepsilon+\sum_{i=1}^n \Phi_i(|x_i|),\\
&\leq 2+\varepsilon.
\end{split}
\end{equation*}
The proof can be concluded after applying a (symplectic) rescaling.
\end{proof}

\section{Proof of the main theorems}\label{sec: capacity}

We are finally ready to give the proof of our main results, Theorems \ref{thm: capacity} and \ref{thm: capacity2}.

\begin{proof}[Proof of Theorem \ref{thm: capacity}]
By Theorem \ref{thm: embedding}, monotonicity and conformality of symplectic capacities, we have that 
$$4\min_i(\Phi_i^{-1}(1){(\Phi^*_i)}^{-1}(1))=c(B^{2n}(4\min_i(\Phi_i^{-1}(1){(\Phi^*_i)}^{-1}(1))))\leq (1+O(\varepsilon))^2c(K_{\Phi}\times_L K_{\Phi^*}),$$
for any $\varepsilon>0$ small enough. Therefore, we immediately have that 
$$4\min_i(\Phi_i^{-1}(1){(\Phi^*_i)}^{-1}(1))\leq c(K_{\Phi}\times_L K_{\Phi^*}).$$
Let $I\in\{1,\ldots,n\}$ be the index where $\min_i(\Phi_i^{-1}(1){(\Phi^*_i)}^{-1}(1))$ is attained, consider the symplectic projection map $f_I:K_{\Phi}\times_L K_{\Phi^*}\to \mathbb{R}^2$, given by
$$f(x_1,\ldots,x_n,y_1,\ldots,y_n)=(x_I,y_I).$$
Notice that the image of $f$ is contained in 
\[
 [-\Phi_I^{-1}(1),\Phi_I^{-1}(1)]\times[-{(\Phi^*_I)}^{-1}(1),{(\Phi^*_I)}^{-1}(1)]\times \mathbb{R}^{2n-2},
\]
 By arguments involving Gromov's Non-Squeezing Theorem, we can see that 
$$c(K_{\Phi}\times_L K_{\Phi^*})\leq 4\min_i(\Phi_i^{-1}(1){(\Phi^*_i)}^{-1}(1))),$$
and this concludes the proof.
\end{proof}

\begin{proof}[Proof of Theorem \ref{thm: capacity2}]
The proof is similar in nature to that of Theorem \ref{thm: capacity}. By projecting to a smart choice of a symplectic plane, it follows from Gromov's Non-Squeezing Theorem, that for any normalized symplectic capacity, $c(K_{\Phi}\times_L K_{\Phi}^{\circ}) \leq 4$. On the other hand, the lower bound comes as a direct consequence of Theorem \ref{thm: embedding2} and the conformality of symplectic capacities. In particular, by \eqref{eq: lower_ineq_Young} we have that $ c(K_{\Phi}\times_L K_{\Phi}^{\circ})> 2$, as claimed.
\end{proof}

\section{A technical inequality}\label{sec: inequality}

In this section we present the proof of the inequality in \eqref{eq: lower_ineq_Young}. This is extracted from \cite{Rao1991TheoryOO}.

\begin{theorem}
Let $\Phi:(a,b)\to \mathbb{R}$ be a function. Then $\Phi$ is convex, if and only if, for each closed subinterval $[c,d]\subset (a,b)$ we have that 
\begin{equation}\label{eq: integral_theorem}
\Phi(x)=\Phi(c)+\int_c^x\varphi(t)dt,\hspace{8mm} c\leq x\leq d,
\end{equation}
where $\varphi:\mathbb{R}\to\mathbb{R} $ is a monotone, nondecreasing and left continuous function. Also, $\Phi$ has a left and a right derivative at each point of $(a, b)$ and they are equal except perhaps for at most a countable number of points.
\end{theorem}
\begin{proof}
Let $c \leq x_i \leq d, i = 1,2$, and, in order to use the definition of convexity, let $c\leq x < y < z \leq d$. Making $\alpha = \frac{y-x}{z-x}$ and $\beta =\frac{z- y}{z-x}$, we have that $0 <\alpha,\beta < 1$ and $\alpha + \beta = 1$. By taking $x_1 = z$ and $x_2 = x$, we can see that 
$$\Phi(y) = \Phi(\alpha x_1 + \beta x_2)\leq \alpha\Phi(x_1) + \beta \Phi(x_2) = \alpha \Phi(z) + \beta \Phi(x).$$
Substituting for $\alpha,\beta$ and rearranging the equation above, we get by recasting $z-x$ as $(z-y)+(y-x)$, that 
\begin{equation}\label{eq: quotient_ineq}
\frac{\Phi(y)-\Phi(x)}{y-x}\leq \frac{\Phi(z)-\Phi(y)}{z-y}.
\end{equation}
This implies that the difference quotient for $\Phi$ is nondecreasing in $[c, d]$. Hence if $c < c_1 \leq x < y \leq d_1 < d$, we can see that \eqref{eq: quotient_ineq} can be extended to get
\begin{equation}\label{eq: quotient_ineq_2}
 \frac{\Phi(c_1)-\Phi(c)}{c_1-c}\leq \frac{\Phi(y)-\Phi(x)}{y-x}\leq \frac{\Phi(d)-\Phi(d_1)}{d-d_1}.
\end{equation}
It follows from this that
\begin{equation}
|\Phi(y)-\Phi(x)|\leq K|y-x|,
\end{equation}
for $K$ given by the maximum of the extreme terms in \eqref{eq: quotient_ineq_2} in absolute value. This implies that $\Phi$ satisfies a Lipschitz condition in $[c,d]$, and thus, in particular is absolutely continuous in $(a, b)$. We recall that $\Phi$ being absolutely continuous means that: for each $\varepsilon > 0$, there is a $\delta_{\varepsilon} > 0$ such that for any disjoint intervals $[a_i,b_i)\subset (a,b)$ satisfying $\sum_{i=1}^n|b_i-a_i|<\delta_{\varepsilon}$, then $\sum_{i=1}^n|\Phi(b_i)-\Phi(a_i)|<\varepsilon$. It follows then by the classical Lebesgue-Vitali theorem that
\begin{equation}\label{eq: integral_form}
\Phi(x)=\Phi(a)+\int_a^x\Phi'(t)dt,\hspace{8mm} a\leq x\leq b.
\end{equation}
We are now left with verifying the properties of $\Phi'$. Making $y = x + h$ in \eqref{eq: quotient_ineq}, with $h > 0$, we get
$$(D^+\Phi)(x) = \lim_{h\to 0^+}\frac{\Phi(x+h)-\Phi(x)}{h}\leq \frac{\Phi(d)-\Phi(d_1)}{d-d_1}<\infty,$$
and
$$(D^-\Phi)(x) = \lim_{h\to 0^+}\frac{\Phi(y)-\Phi(y-h)}{h}\geq \frac{\Phi(c_1)-\Phi(c)}{c_1-c} >-\infty.$$
Therefore the right and left derivatives of $\Phi$ exist at each point of $[c, d]$ and for $x < y$,
\begin{equation}
(D^-\Phi)(x) \leq \frac{\Phi(y)-\Phi(x)}{y-x}\leq (D^+\Phi)(x).
\end{equation}
Given that $(D^-\Phi)(x)\leq (D^+\Phi)(x)$ by \eqref{eq: quotient_ineq_2}, $(D^{\pm}\Phi)(\cdot)$ is increasing, and the set of discontinuities of these functions is, at most, a countable set. Hence $(D^-\Phi)(x)=(D^+\Phi)(x)$ at each continuity point of these functions, and this common value is $\Phi'$ of \eqref{eq: integral_form}. This establishes \eqref{eq: integral_theorem} with $\varphi=\Phi'$.\\
For the converse statement, assume \eqref{eq: integral_theorem} holds for $\Phi$. For $c <x < d$ consider the chord $L(x)$ joining $(c,\Phi(c))$ and $(d, \Phi(d))$ which is
given by
$$L(x)=\Phi(c)+\frac{\Phi(c)-\Phi(d)}{c-d}(x-c.)$$
To see that this is above the arc given by the graph of $\Phi$, i.e., $\geq \Phi(x)$, we have to show
\begin{equation}\label{eq: quotient_ineq3}
\frac{\Phi(x)-\Phi(c)}{x-c}\leq \frac{\Phi(d)-\Phi(c)}{d-c}, \hspace{8mm} c<x<d.
\end{equation}
Substituting \eqref{eq: integral_theorem} for $\Phi(\cdot)$, we verify it. It is not hard to see that
\begin{equation}
\frac{1}{x-c}\int_c^x\varphi(t)dt \leq \varphi(x) \leq \frac{1}{d-x}\int_x^d\varphi(t)dt
\end{equation}
since $\varphi(c) \leq \varphi(t)\leq \varphi(x) \leq \varphi(d)$, for $c < t < x < d$. The right side of \eqref{eq: quotient_ineq3} can be expressed as
\begin{equation*}
\begin{split}
\frac{\int_c^x\varphi(t)dt+\int_x^d\varphi(u)du}{(d-x)+(x-c)}&\geq\min \left(\frac{1}{x-c}\int_c^x\varphi(t)dt,\frac{1}{d-x}\int_x^d\varphi(u)du\right),\\
&=\frac{1}{x-c}\int_c^x\varphi(t)d.,
\end{split}
\end{equation*}
The last inequality is a consequence of the elementary relation
$$\min\left(\frac{a_1}{b_1},\frac{a_2}{b_2}\right)\leq \frac{a_1+a_2}{b_1+b_2}\leq\max\left(\frac{a_1}{b_1},\frac{a_2}{b_2}\right),$$
for any real numbers $a_1,a_2$ and positive numbers $b_1,b_2$. Thus \eqref{eq: quotient_ineq3} holds, and $\Phi$ given by \eqref{eq: integral_theorem} is convex in $(a, b)$ as claimed.
\end{proof}

Recall that a Young function $\Phi : \mathbb{R}\to \overline{\mathbb{R}}^+$ is convex and $\Phi(0) = 0$, but which may jump to $+\infty$ at a finite point, it is clear that if $\Phi(a) = +\infty$ for some $a > 0$, then $\Phi(x) = +\infty$ for all $x > a$. Interpreting $\varphi(0) = 0$ and $\varphi(x) = \infty$ for $x > a$ in this case, \eqref{eq: integral_theorem} continues to hold. Hence we may state the representation \eqref{eq: integral_theorem} for any Young function $\Phi$ in the following form.
\begin{cor}\label{cor: integral_form}
Let $\Phi : \mathbb{R}\to \overline{\mathbb{R}}^+$ be a young function. Then it can be represented as 
\begin{equation}\label{eq: integral_corollary}
\Phi(x)=\int_0^x\varphi(t)dt,\hspace{8mm} x\in \mathbb{R}^+,
\end{equation}
where $\varphi(0)=0, \varphi : \mathbb{R}\to \overline{\mathbb{R}}^+$ nondecreasing, left continuous and if $\varphi(x) = +\infty$ for $x > a$ then $\Phi(x) = +\infty$, for $x > a > 0$.
\end{cor}

We are now ready to give the proof of \eqref{eq: lower_ineq_Young}. Note that for any $a > 0$, we have by \eqref{eq: integral_corollary} and the mean value theorem for Lebesgue integrals, that
$$\frac{\Phi(a)}{a}=\frac{1}{a}\int_0^a\varphi(t)dt=\varphi(t^*), \hspace{5mm} \textup{ for some } 0<t^*<a. $$
Hence, by using Corollary \ref{cor: integral_form} for $\Phi^*$ that
\begin{equation}\label{eq: dual_ineq}
\begin{split}
\Phi^*\left(\frac{\Phi(a)}{a}\right)&=\int_0^{\Phi(a)/a}\varphi^*(t)dt=\frac{\Phi(a)}{a}\varphi^*(\tilde{t}), \hspace{5mm} 0<\tilde{t}<\frac{\Phi(a)}{a}=\varphi(t^*)\\
&<\frac{\Phi(a)}{a}\varphi^*(\varphi(a))\leq \frac{\Phi(a)}{a}a=\Phi(a),
\end{split}
\end{equation}
since $\varphi$ and $\varphi^*$ are inverse to each other. Note that here $\varphi^*$ represents the function in \eqref{eq: integral_corollary} corresponding to $\Phi^*$. Letting $\Phi(a) = \alpha$ we get from \eqref{eq: dual_ineq} that
$$\frac{\alpha}{\Phi^{-1}(\alpha)}<\Phi^*(\alpha), \hspace{3mm} \textup{ or } \hspace{3mm} \alpha<\Phi^{-1}(\alpha)(\Phi^*)^{-1}(\alpha).$$

\bibliographystyle{unsrt}
\bibliography{main}

\begin{thebibliography}{10}

\bibitem{Gromov1985PseudoHC}
Mikhael Gromov.
\newblock Pseudo holomorphic curves in symplectic manifolds.
\newblock {\em Inventiones mathematicae}, 82:307--347, 1985.

\bibitem{Cieliebak2005QuantitativeSG}
Kai Cieliebak, Helmut Hofer, Janko Latschev, and Felix Schlenk.
\newblock Quantitative symplectic geometry.
\newblock 2005.

\bibitem{Viterbo2000MetricAI}
Claude Viterbo.
\newblock Metric and isoperimetric problems in symplectic geometry.
\newblock {\em Journal of the American Mathematical Society}, 13:411--431,
  2000.

\bibitem{ArtsteinAvidan2013FromSM}
Shiri Artstein-Avidan, Roman~N. Karasev, and Y~Ostrover.
\newblock From symplectic measurements to the mahler conjecture.
\newblock {\em Duke Mathematical Journal}, 163:2003--2022, 2013.

\bibitem{HaimKislev2024ACT}
Pazit Haim-Kislev and Y~Ostrover.
\newblock A counterexample to viterbo's conjecture.
\newblock 2024.

\bibitem{Karasev2019MahlersCF}
Roman~N. Karasev.
\newblock Mahler’s conjecture for some hyperplane sections.
\newblock {\em Israel Journal of Mathematics}, 241:795 -- 815, 2019.

\bibitem{Gutt2022ExamplesAT}
Jean Gutt, Michael Hutchings, and Vinicius Gripp~Barros Ramos.
\newblock Examples around the strong viterbo conjecture.
\newblock {\em Journal of Fixed Point Theory and Applications}, 24, 2022.

\bibitem{CristofaroGardiner2023OnTA}
Daniel Cristofaro-Gardiner and Richard Hind.
\newblock On the agreement of symplectic capacities in high dimension.
\newblock 2023.

\bibitem{Ostrover2019SymplecticEO}
Yaron Ostrover and Vinicius Gripp~Barros Ramos.
\newblock Symplectic embeddings of the $l_p$-sum of two discs, 2019.

\bibitem{Rao1991TheoryOO}
Malempati~M. Rao and Zhong~Dao Ren.
\newblock Theory of orlicz spaces.
\newblock 1991.

\bibitem{Latschev2011TheGW}
Janko Latschev, Dusa Mcduff, and Felix Schlenk.
\newblock The gromov width of 4-dimensional tori.
\newblock {\em Geometry \& Topology}, 17:2813--2853, 2011.

\bibitem{Schlenk2001ESchlembeddingPI}
Felix Schlenk.
\newblock Embedding problems in symplectic geometry. phd thesis. eth, 2001.

\bibitem{Schlenk2005ESchlembeddingPI}
Felix Schlenk.
\newblock Embedding problems in symplectic geometry, 2005.

\end{thebibliography}
\end{document}